\documentclass[11pt]{amsart}
\usepackage[T1]{fontenc}
\usepackage{lmodern}
\usepackage{amsmath,amssymb,amsthm,mathtools}
\usepackage{enumitem,microtype,needspace}
\usepackage[margin=1.1in]{geometry}
\usepackage[colorlinks=true,linkcolor=blue,citecolor=blue,urlcolor=blue]{hyperref}

\usepackage{tikz}
\usetikzlibrary{arrows.meta,decorations.markings,calc}

\newtheorem{theorem}{Theorem}[section]
\newtheorem{proposition}[theorem]{Proposition}
\newtheorem{lemma}[theorem]{Lemma}
\newtheorem{corollary}[theorem]{Corollary}
\theoremstyle{definition}
\newtheorem{definition}[theorem]{Definition}
\newtheorem{question}[theorem]{Question}
\theoremstyle{remark}
\newtheorem{remark}[theorem]{Remark}

\newcommand{\ThZ}{\Theta^3_{\mathbb{Z}}}
\newcommand{\shsim}{\sim_{\mathrm{sh}}}
\numberwithin{equation}{section}
\setlist[enumerate]{leftmargin=*,label=\textup{(\roman*)},itemsep=3pt,topsep=5pt}
\title[Shake concordance and the Hopf link]{Shake concordance in the topological concordance class of the Hopf link}

\author[W.\ Shen]{Weizhe Shen}
\address{Department of Mathematics, University of California, Davis}
\email{wzshen@ucdavis.edu}

\begin{document}

\begin{abstract}
    Within the topological concordance class of the Hopf link, we construct infinitely many classes under the equivalence relation generated by smooth $0$-shake concordance. Each contains infinitely many smooth concordance classes represented by links that are pairwise strongly $0$-shake concordant and whose second components have unbounded smooth slice genus.
\end{abstract}

\maketitle

% \tableofcontents

%----------------------------------------------------%
%----------------------------------------------------%
\section{Introduction}
The topological concordance class of the Hopf link contains links that are not smoothly concordant to it \cite{CKRS,DR,KKP}. Shake concordance gives a weaker smooth relation than smooth concordance: every knot together with a positive meridian is $0$-shake concordant to the Hopf link \cite[Proposition 3.1]{Bos19}, and the Cha--Kim--Ruberman--Strle examples are even strongly $0$-shake concordant to it \cite[Proposition 3.5]{Bos19}. These results leave open how much of the smooth concordance structure survives when passing to shake concordance. We construct infinitely many shake-equivalence classes, each containing infinitely many smooth concordance classes represented by links that are topologically concordant to the Hopf link and whose second components have unbounded smooth slice genus.

We use \textit{shake concordance} to mean smooth $0$-shake concordance. For ordered, oriented links $L$ and $L'$ in $S^3$ with the same number of components, we write $L\shsim L'$ and call $L$ and $L'$ \textit{shake equivalent} if they are joined by a finite chain of shake concordances. Thus $\shsim$ is the equivalence relation generated by shake concordance, and a \textit{shake-equivalence class} is an equivalence class for $\shsim$. We do not assume that shake concordance itself is transitive. See Section \ref{sec:shake} for the link definitions and \cite[Section 2 and Corollary 3.8]{CR} for the corresponding distinction in the knot setting.

For a link $L\subset S^3$, let $M_L$ denote the closed oriented $3$-manifold obtained by zero-framed surgery on all components of $L$. For knots $J$ and $K$, let $\mathcal{H}(J,K)$ be the ordered link obtained from the positive Hopf link by tying $J$ and $K$ into its first and second components, respectively, in disjoint balls. Write $U$ for the unknot, $T$ for the right-handed trefoil, and $H=\mathcal{H}(U,U)$ for the Hopf link. We use Hedden's convention for the positive-clasped, $t$-twisted Whitehead double $D_+(K,t)$ \cite{Hed}, and abbreviate the untwisted operation by
\[
    \mathcal{D}(K)=D_+(K,0).
\]
Its iterates are denoted by $\mathcal{D}^j$. Set
\begin{equation}\label{eq:family}
    K_n=D_+(U,-n),\qquad K_\ast=\mathcal{D}(T),\qquad Z=\mathcal{D}^2(K_\ast)=\mathcal{D}^3(T),\qquad n\geq1.
\end{equation}
In particular, $K_1=T$. The group of oriented integral homology $3$-spheres modulo smooth integral homology cobordism is denoted by $\ThZ$.

\Needspace{16\baselineskip}
\begin{theorem}\label{thm:main}
    For each $n\geq 1$, let
    \[
        L_n=\mathcal{H}\bigl(\mathcal{D}(K_n),K_\ast\bigr).
    \]
    Then $L_n$ is topologically concordant to the Hopf link, and there is an orientation-preserving diffeomorphism
    \begin{equation}\label{eq:main-surgery}
        M_{L_n}\cong S^3_{1/n}(Z).
    \end{equation}
    The classes $[M_{L_n}]$, $n\geq 1$, are linearly independent in $\ThZ$, whereas their Ozsv\'ath--Szab\'o correction terms satisfy
    \[
        d(M_{L_n})=-2\qquad\text{for every }n\geq 1.
    \]
    Consequently, $L_n\not\shsim L_m$ for $n\neq m$, and $L_n\not\shsim H$ for every $n$. The non-equivalence conclusions remain valid if component orderings and orientations are disregarded.
\end{theorem}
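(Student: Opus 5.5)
The plan is to prove the statement in four steps, in this order: topological concordance, the surgery identification \eqref{eq:main-surgery}, the two Floer-theoretic facts, and finally the shake-equivalence conclusions.

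\emph{Topological concordance.} Every Whitehead double $\mathcal{D}(K)$ has Alexander polynomial $1$, so it is topologically slice by Freedman. This applies to both $\mathcal{D}(K_n)$ and $K_\ast=\mathcal{D}(T)$. Tying a topologically slice knot into a component of a link preserves the topological concordance class: take the product concordance and band in the locally flat slice disk, working in disjoint balls. Therefore $L_n$ is topologically concordant to $\mathcal{H}(U,U)=H$.

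\emph{The surgery identification.} This is a Kirby calculus argument on the two-component framed diagram of $\mathcal{H}(J,K)$ with both framings $0$.
\begin{enumerate}
\item The first component is a local connected sum $J\#\text{meridian}$ of the second component, and it has framing $0$. Slide or cancel so that the pair becomes surgery on the second component with the knot $J$ ``satellited'' in. Concretely, for the Hopf link with $J$ tied in, $0$-surgery on both components is diffeomorphic to $0$-surgery on the single knot obtained by infecting the other component along a meridian by $J$.
\item Since a meridian of $K$ is isotopic to an unknot, this reduces as follows. Zero surgery on $\mathcal{H}(J,K)$ equals $S^3_0$ of $K$ together with the meridian, with $J$ tied in. Blowing down is not available at framing $0$, so instead use the standard fact $M_{\mathcal{H}(J,K)}\cong S^3_0(J)\cup S^3_0(K)$, glued by exchanging meridian and longitude along the knot exteriors. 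In other words, it is the splice of the exteriors of $J$ and $K$.
\item Now set $J=\mathcal{D}(K_n)=D_+(K_n,0)$. The exterior of a Whitehead double splits along the companion torus. The splice of $E(K_\ast)$ with the pattern piece and with $E(K_n)$ then regroups. The pattern-plus-$E(K_\ast)$ side recombines into $E(\mathcal{D}(K_\ast))$-type pieces, and $E(K_n)$, where $K_n$ is a twist knot, is presented by a $-1/n$ twist on an unknot.
\item Tracking through this, with the symmetry of the Whitehead link swapping its components, should give $S^3_{1/n}(\mathcal{D}^2(K_\ast))=S^3_{1/n}(Z)$.
\end{enumerate}
I expect this diagrammatic bookkeeping to be the main obstacle, especially getting the signs and orientation right. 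I would do it with explicit tikz Kirby moves. The key move is a Rolfsen twist on the unknotted component of the twist-knot presentation of $K_n$, which converts the $-n$ twists into a $1/n$ coefficient.

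\emph{Floer-theoretic facts.} For $d$: $Z$ is a Whitehead double of a knot with $\tau>0$, so by Hedden $\tau(Z)=1$ and $\nu^+(Z)=V_0$-type invariants satisfy $V_0(Z)=1$. Ni--Wu then gives $d(S^3_{1/n}(Z))=-2V_0(Z)=-2$. For linear independence: $Z$ is an iterated Whitehead double of $T$, and results of Hom--Stoffregen--Zemke or Hom--Kang--Park--Stoffregen (the $\phi_j$ homomorphisms, or the $1/n$-surgery linear independence theorem for Whitehead-double-type knots) show that $\{S^3_{1/n}(K)\}_n$ is linearly independent in $\ThZ$. I would invoke the HKPS theorem that $S^3_{1/n}(D)$ are independent whenever $D$ has the appropriate knot Floer local type, verified via Hedden's computation for $\mathcal{D}$ of a knot with $\tau>0$.

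\emph{Shake-equivalence conclusions.} Shake concordance induces a homology cobordism of zero-surgeries: the shake concordance's surgery trace yields a smooth homology cobordism between $M_L$ and $M_{L'}$. This is the link version of the standard fact recalled in Section~\ref{sec:shake}. Hence $\shsim$ preserves $[M_L]\in\ThZ$. Distinct $n$ give distinct classes by independence, and $M_H=S^3$ is trivial while each $[M_{L_n}]$ is nonzero. Reordering components does not change $M_L$. Reversing orientations of components changes neither the manifold nor, after orienting the ambient $S^3$ consistently, its oriented diffeomorphism type, so the conclusions persist.
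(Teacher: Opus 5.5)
The genuine gap is the linear independence of the classes $[S^3_{1/n}(Z)]$. The result you propose to invoke does not exist. Hom--Kang--Park--Stoffregen is about rationally slice knots in the knot concordance group. There is no Heegaard Floer theorem, via the Dai--Hom--Stoffregen--Truong homomorphisms or otherwise, that separates $S^3_{1/n}(K)$ as $n$ varies for a knot of trefoil type.

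This is not merely a citation problem. Up to local equivalence, ordinary Heegaard Floer homology only sees $d$, and $d(S^3_{1/n}(Z))=-2$ for every $n$, as the theorem itself records. Hedden's computation, sharpened by Hedden--Kim--Livingston, identifies $CFK^\infty(Z)$ with $CFK^\infty(T)$ up to an acyclic summand. Already for $T$, the involutive local class of $S^3_{1/n}(T)$ depends only on the parity of $n$. The linear independence of these Brieskorn spheres is a gauge-theoretic theorem of Furuta and Fintushel--Stern.

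The paper uses instanton theory instead. By Theorem~\ref{thm:BS} (Baldwin--Sivek, via $h(S^3_1(K))<0$ and the Nozaki--Sato--Taniguchi criterion), the $1/n$-surgeries on any quasipositive, non-slice knot are linearly independent. What you must supply is Lemma~\ref{lem:Z}:
\begin{itemize}
\item $Z$ is strongly quasipositive, by Rudolph's lemma that the positive untwisted double of a nontrivial strongly quasipositive knot is again strongly quasipositive. Apply it three times starting from $T$; each intermediate double is nontrivial because its $\tau$ equals $1$.
\item $Z$ is not slice, because $\tau(Z)=1$.
\end{itemize}
Without this step your argument still gives $L_n\not\shsim H$, from $d(M_{L_n})=-2\neq 0=d(S^3)$. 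It does not give $L_n\not\shsim L_m$ for $n\neq m$, which is exactly the part that needs independence.

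The rest of your outline follows the paper. The splice decomposition $E(K_n)\cup E(\mathcal{W}_+)\cup E(K_\ast)$, regrouped by the component-exchange symmetry of the Whitehead link, together with the description of $E(K_n)$ as a filling of $E(\mathcal{W}_+)$, is exactly the route through \eqref{eq:transfer} and \eqref{eq:twist}. What remains there is fixing the sign of $1/n$, which the paper does via the conventions \eqref{eq:pattern} and the disk twist in Lemma~\ref{lem:filling}.

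For $d$, note that $\tau(Z)=1$ alone only gives $V_0(Z)\geq 1$. You also need $g_4(Z)\leq 1$ from the genus-one Whitehead surface. Then $1=\tau\leq\nu^+\leq g_4\leq 1$ forces $V_1(Z)=0$, and hence $V_0(Z)\leq V_1(Z)+1=1$.
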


The first member of the family is $L_1=\mathcal{H}(K_\ast,K_\ast)$. Thus tying the same positive untwisted Whitehead double into both Hopf components already gives a link that is topologically concordant, but not shake equivalent, to the Hopf link. The correction term alone separates every $L_n$ from $H$, since $d(M_{L_n})=-2$ and $d(M_H)=0$. The linear-independence statement distinguishes the $L_n$ from one another, even though their correction terms are equal.

The construction is based on two surgery identities. The first turns the twisting parameter $-n$ into the surgery coefficient $1/n$. The second transfers an untwisted Whitehead double between the two local knots while preserving the oriented zero-surgery manifold.

\begin{theorem}\label{thm:surgery}
    For knots $J,K$ and a nonzero integer $n$, there are orientation-preserving diffeomorphisms
    \begin{align}
        M_{\mathcal{H}(K,D_+(U,-n))}
        &\cong S^3_{1/n}\bigl(\mathcal{D}(K)\bigr),\label{eq:twist}\\
        M_{\mathcal{H}(\mathcal{D}(J),K)}
        &\cong M_{\mathcal{H}(J,\mathcal{D}(K))}.\label{eq:transfer}
    \end{align}
\end{theorem}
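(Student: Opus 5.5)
The plan is to argue entirely with Kirby calculus on the two-component framed diagram of $M_{\mathcal{H}(J,K)}$: the Hopf link with both framings $0$, with $J$ tied into the first component and $K$ into the second. The basic tool is the standard slam-dunk principle for the Hopf link. If one component $A$ of a Hopf link carries framing $0$ and the other component $B$ carries any framing, then $A$ is a meridian of $B$. Modulo the local knots, the pair can then be traded for a description in which the knot tied into $A$ becomes a satellite pattern around $B$.

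More concretely, I will use the following reformulation. Zero-surgery on $A$, where $A = J \# (\text{meridian of } B)$, is the same as the following operation: take the exterior of $J$, and glue it in so that it replaces a neighborhood of a meridian disk bounded by $A$. In other words, we perform an infection of $B$ along its meridian by $J$, with the $J$-exterior glued so that the meridian of $J$ goes to the longitude of $A$. I will keep track of orientations and framings carefully by fixing the positive Hopf clasp convention of $\mathcal{H}$.

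\emph{For \eqref{eq:twist}.} Here $B = D_+(U,-n)$. I will realize $B$ as the standard Whitehead pattern in a solid torus $V$ whose core $c$ has been given $-n$ full twists. By a Rolfsen twist this is equivalent to an unknotted pattern together with an extra surgery curve $c$ with coefficient $-1/n$. The first component $A$ (with $K$ tied in) is a meridian of $B$, and I will isotope it to link $c$ appropriately.

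The key move is then to slam-dunk or cancel the pair $(A,B)$. Since $B$ is unknotted in $S^3$, the surgery on $B$ together with its $0$-framed meridian $A$ can be cancelled as a $0$-framed Hopf pair. This cancellation is legitimate once $K$ has been absorbed, and the absorbed $K$ ends up as a Whitehead-double clasp around $c$. What remains is a single curve $c'$ which is $\mathcal{D}(K)$, the untwisted double coming from the clasp structure. Its coefficient is transformed from $-1/n$ into $1/n$ by the orientation and twisting conventions.

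I will verify this step on the case $K = U$. There the result must be $S^3_{1/n}(U) = S^3$, which is consistent with $H$-surgery after the twist. The sign will be pinned down by comparison with Hedden's convention for $D_+$.

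\emph{For \eqref{eq:transfer}.} I will write $\mathcal{D}(J)$ as the Whitehead pattern $P \subset V$ infected by $J$. The Whitehead link $P \cup \mu_V$ is symmetric: there is an isotopy of $S^3$ exchanging its two components and preserving the orientation conventions, namely the clasp sign and zero linking number.

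In the diagram for $M_{\mathcal{H}(\mathcal{D}(J),K)}$, the first component is $P$ (infected by $J$ along the core of $V$). It is Hopf-linked to the second component $K$. Using the infection description above, the whole manifold is obtained from $0$-surgery on a link consisting of the Whitehead link union a Hopf clasp, with the exteriors of $J$ and $K$ glued along appropriate curves. The symmetry of the Whitehead link swaps which component carries the $J$-infection and which carries the $K$-infection, and this yields $M_{\mathcal{H}(J,\mathcal{D}(K))}$.

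\emph{Main obstacle.} The main obstacle is the bookkeeping in this symmetry: one must check that the Whitehead-link exchange carries the relevant meridians and longitudes correctly and preserves the zero framings. For this I will use the fact that all linking numbers are $0$, so the framings are preserved. Orientation preservation follows because the exchange is realized by an orientation-preserving isotopy, and the positive clasp is preserved under it.
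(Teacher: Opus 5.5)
Your overall plan matches the paper's: untwist $D_+(U,-n)$ into $\mathcal{W}_+$ with a rational coefficient on the axis, re-embed the complement of an unknotted component as $\nu(K)$, and use the exchange symmetry. The trouble is that the tool both identities rest on is misstated, and as written it is false. Zero-surgery on $J\#\eta$, with $\eta$ an unknotted meridian of $B$, is $S^3_0(J)$. Infecting along $\eta$ with $\mu_J\mapsto\lambda_\eta$ instead returns $S^3$ and merely replaces $B$ by $B\#J$. Following it by $0$-surgery on $B$ then yields $S^3_0(J\#K)$, which has $H_1\cong\mathbb{Z}$ and so is not $M_{\mathcal{H}(J,K)}$. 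Your ``cancellation as a $0$-framed Hopf pair'' is also not a standard move here, because the meridian $K\#\mu_P$ is knotted and $P$ is clasped with $c$.

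What you actually need is the relative form of Lemma~\ref{lem:splice}. If $B$ is unknotted and $J\#\mu_B$ lies in a collar of $\partial\nu(B)$, then $0$-surgery on $(J\#\mu_B)\cup B$ is $S^3$. Moreover, the rest of the solid torus $S^3\setminus\nu(B)$ is carried onto $\nu(J)$ by the zero-framed identification $\lambda_B\mapsto\mu_J$, $\mu_B\mapsto\lambda_J$, up to a simultaneous sign. Apply this with $B=P$ and $J=K$, together with two further facts:
\begin{itemize}
\item $\sigma$ shows that the axis is a \emph{positive} Whitehead pattern in $S^3\setminus\nu(P)$;
\item $\operatorname{lk}(c,P)=0$ makes Seifert longitudes correspond, as in Lemma~\ref{lem:satellite}.
\end{itemize}
This turns the axis into $\mathcal{D}(K)$ with its coefficient unchanged. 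The same statement applied to $(K\#\mu_P)\cup P$, with $\nu(A)$ replaced by $E(J)$, gives \eqref{eq:transfer}; that is the paper's three-piece argument with $\sigma$. Two smaller corrections: $\mathcal{D}(J)$ is $P$ infected along the axis $A$, not along the core of $V$, and your twisting curve $c$ must likewise be the axis encircling the two strands.

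Second, the sign in \eqref{eq:twist}, which is the substantive content of that identity, is not established. Under \eqref{eq:pattern}, $-n$ full twists correspond to coefficient $+1/n$ on the axis, not $-1/n$. Blowing down a $+1$-framed unknot inserts a left-handed full twist; in peripheral terms, the map $f_n$ of Lemma~\ref{lem:filling} satisfies $f_n(\mu_A+n\lambda_A)=\lambda_U$. The later step cannot flip the sign, because the re-embedding is orientation-preserving and matches Seifert longitudes. So your ``$-1/n$ becomes $1/n$ by the orientation and twisting conventions'' amounts to two sign errors that happen to cancel.

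Your proposed check cannot settle this. For $K=U$ both sides are $S^3$ for either sign, since $\mathcal{D}(U)=U$. Even the Casson invariant $\tfrac n2\Delta''_{\mathcal{D}(K)}(1)$ vanishes for both signs, because $\Delta_{\mathcal{D}(K)}=1$. The sign has to come from explicit peripheral bookkeeping as in Lemma~\ref{lem:filling}. It matters downstream, since \eqref{eq:niwu} and Theorem~\ref{thm:BS} concern $1/n$-surgery with $n\geq1$. With these two repairs, your argument becomes the paper's proof written in Kirby-calculus language.
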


The cases $n=1$ and $n=-1$ of \eqref{eq:twist} recover, respectively, the trefoil and figure-eight surgery formulas recorded by Tange \cite[Section 1.1]{Tan}, with special cases in Tsuchiya's work \cite[Theorem 1.1]{Tsu}. Both identities in Theorem \ref{thm:surgery} follow from the peripheral structure and classical symmetries of the Whitehead link. We give proofs in Section \ref{sec:whitehead}, including the orientation conventions needed to determine the sign of $1/n$.

For links with unimodular zero-framed linking matrices, the sphere-surgery construction of \cite{CR,Bos21} makes zero-surgery invariant under shake concordance up to smooth integral homology cobordism; Section \ref{sec:shake} gives the homology calculation. Theorem \ref{thm:surgery} identifies the $M_{L_n}$ with surgeries whose classes are linearly independent in $\ThZ$ by a theorem of Baldwin and Sivek \cite[Corollary 1.5]{BS}, building on \cite{NST}. The local knots of each $L_n$ have Alexander polynomial one and are therefore topologically slice by Freedman's theorem \cite[Theorem 11.7B]{FQ}, giving the topological concordances to the Hopf link.

Our second construction produces infinitely many smooth concordance classes within each of these shake-equivalence classes.

\begin{theorem}\label{thm:refinement}
    For every $n\geq 1$, there is a sequence of ordered two-component links
    \[
        \{\widehat L_{n,j}\}_{j\geq 0},\qquad \widehat L_{n,0}=L_n,
    \]
    all topologically concordant to $H$, such that any two members admit a single strong shake concordance. No two members are smoothly concordant, even after component orderings and orientations are disregarded. The first component of every $\widehat L_{n,j}$ has knot type $\mathcal{D}(K_n)$; writing $C_{n,j}$ for the second component of $\widehat L_{n,j}$, we have
    \[
        g_4(C_{n,j})\geq j+1\qquad(j\geq 0),
    \]
    where $g_4$ denotes the smooth slice genus. In particular, these genera tend to infinity as $j\to\infty$.
\end{theorem}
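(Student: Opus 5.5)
We will build the sequence by modifying only the second component of $L_n$, in a way that is invisible to strong shake concordance but still raises the slice genus. The natural device is the standard one behind \cite[Proposition 3.5]{Bos19}. If a knot $C'$ is obtained from $C$ by a band sum with a meridian-parallel pushoff, that is, by "shaking" along a zero-framed $2$-handle attached to the first component, then the two links are strongly $0$-shake concordant.

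Concretely, set $\widehat L_{n,0}=L_n$. We obtain $\widehat L_{n,j+1}$ from $\widehat L_{n,j}$ by banding the second component to two oppositely oriented parallel copies of the first component $\mathcal{D}(K_n)$. Here the parallels are taken with framing zero, and the band is tied so that the new second component acquires an extra clasp with a positive Whitehead-type pattern. By construction this is a strong shake concordance of a two-component link: the first component is untouched, and the second cobounds with its old self an annulus in $S^3\times I$ together with $2j$ copies of the core of a $0$-framed $2$-handle attached along the first component. Since all shake concordances here can be taken to start at $L_n$ and share the same $2$-handle, any two members are joined by a single strong shake concordance.

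Topological concordance to $H$ persists for two reasons. Banding along copies of an Alexander polynomial one knot preserves the triviality of the Blanchfield-type obstructions. More simply, $\mathcal{D}(K_n)$ is topologically slice, so in the topological category the parallels can be capped off and the bands undone. Hence each $\widehat L_{n,j}$ is topologically concordant to $L_n$, and therefore to $H$.

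For the genus bound we choose the band twisting so that $C_{n,j}$ contains $j+1$ positive untwisted Whitehead-double summands, up to concordance in the complement of the first component. We then bound below using $\tau$ (or Hedden's formula for $\tau$ of Whitehead doubles \cite{Hed}) together with additivity. Since $\tau(\mathcal{D}(T))=1$, we aim for $\tau(C_{n,j})=j+1$, which gives $g_4(C_{n,j})\ge |\tau|\ge j+1$.

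Smooth non-concordance is then immediate. The second components have distinct $\tau$, and the first components all equal $\mathcal{D}(K_n)$, which has $\tau=1$ and so cannot be exchanged with a $C_{n,j}$ of different $\tau$. Reversing orientation preserves $\tau$, so disregarding orderings and orientations still yields pairwise non-concordance except possibly in the case $j=0$. That case is handled by comparing with $\tau(K_\ast)=1$ and using the linking/$\mathcal{H}$ structure, or else by using a second invariant if needed.

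The main obstacle is making $\tau$ actually grow. Banding with parallel copies of the first component tends to add a knot of the form $\mathcal{D}(K_n)\#-\mathcal{D}(K_n)$, which is slice, so $\tau$ need not change. I would therefore first try band-summing with a zero-framed parallel of $\mathcal{D}(K_n)$ that is itself satellited, using the fact that a shake concordance allows arbitrary bands. The goal is to arrange that the result is concordant to $C_{n,j}\#\mathcal{D}(T)$ and to compute $\tau$ via Hedden's cabling and doubling formulas. If that fails, I would fall back on the slice-Bennequin inequality applied to an explicit quasipositive diagram of $C_{n,j}$.
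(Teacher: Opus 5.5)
Your construction cannot give a \emph{strong} shake concordance, and strongness is what the theorem asserts. Suppose the second component $C_2$ is banded to parallels of the first component $L_1$. Then, in Definition \ref{def:shake}, the surface $F_2$ has boundary components that are zero-framed parallels of $L_1$, so $a_{21}$ or $b_{21}$ is nonzero. Strongness requires $F_2$ to meet only parallels of the second component, so shaking along the first component's $2$-handle is exactly what it excludes.

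The missing idea is to make the modification local to $\nu(C_2)$, using a satellite instead of bands. Let $P$ be a winding-number-one pattern with $P(U)$ smoothly slice. The Cochran--Ray construction gives a connected genus-zero surface in $(S^1\times D^2)\times[0,1]$ from $P$ to an odd number of zero-framed core parallels. Embedding it in $\nu(C_2)\times[0,1]$, next to a product annulus on the first component, gives a strong shake concordance from $L$ to $\mathcal S^{(2)}_P(L)$ (Lemma \ref{lem:relative-satellite}).

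Your reason for a \emph{single} concordance between any two members is also inadequate. Shake concordance is not assumed to be transitive, so routing through $L_n$ only gives a chain of two. The paper instead uses associativity of satellites, $L_{n,j}=\mathcal S^{(2)}_{Q^{j-i}}(L_{n,i})$, to apply the lemma to each pair directly.

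The genus growth is left unproved, as you acknowledge. Your proposed repair is not permitted, because the boundary of a shake surface must consist of zero-framed parallels of the link components themselves, not of satellites of them. Your topological argument also fails: each parallel of $L_1$ has linking number $\pm1$ with $C_2$, so it cannot bound a disk disjoint from an annulus traced by $C_2$.

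The paper obtains all of this from the Davis--Ray pattern $Q$ \cite{DR}:
\begin{enumerate}
    \item $Q$ has winding number one and $Q(U)=U$, so the lemma applies to every $Q^i$.
    \item Its pattern--axis link is topologically concordant to $H$. Satelliting the topological concordance from $L_n$ to $H$ along the second annulus then shows that $\mathcal S^{(2)}_{Q^i}(L_n)$ is topologically concordant to $H$.
    \item Take a stabilized Legendrian representative of $Q$ with $\operatorname{tb}=0$ and $\operatorname{rot}=2$, and apply it $i$ times to a representative of $K_\ast$ with $\operatorname{tb}=0$ and $\operatorname{rot}=1$. The result has $\operatorname{tb}=0$ and $\operatorname{rot}=2i+1$, so the slice--Bennequin inequality gives $\tau(Q^i(K_\ast))\geq i+1$.
\end{enumerate}
Passing to a subsequence with strictly increasing $\tau$ gives both the genus bound and the smooth distinction, via the multisets $\{1,\tau(C_{n,j})\}$. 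In particular, $j=0$ needs no separate invariant, because its multiset $\{1,1\}$ already differs from all the others.
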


The proof combines a relative form of the satellite construction of Cochran and Ray \cite[Proposition 3.1]{CR} with a pattern of Davis and Ray \cite{DR}. For each pair of links, we apply the relative construction to obtain a single strong shake concordance; no transitivity assumption is needed.

\vspace{6pt}
\paragraph{Organization}
Section \ref{sec:shake} recalls the definition of $0$-shake concordance and establishes the homology cobordism obstruction arising from $0$-surgery. Section \ref{sec:whitehead} proves the Whitehead-link surgery identities of Theorem \ref{thm:surgery}. Section \ref{sec:floer} combines these identities with Floer-theoretic results to prove Theorem \ref{thm:main}. Finally, Section \ref{sec:examples} gives further examples, proves Theorem \ref{thm:refinement}, and discusses converse questions and nonzero framings.

\vspace{6pt}
\paragraph{Conventions}
All knots and links lie in the oriented $3$-sphere. Concordances and cobordisms are oriented and, unless specified as topological, smooth. Topological concordances are locally flat. Links are ordered and oriented unless these choices are explicitly disregarded. All ordinary homology groups have integral coefficients. Surgery manifolds carry the orientation induced from $S^3$, and $\cong$ between oriented manifolds denotes an orientation-preserving diffeomorphism. The construction of $\mathcal{H}(J,K)$ uses the positive Hopf clasp; the corresponding splice convention is recorded in Lemma \ref{lem:splice}.

\section*{Acknowledgments}
\noindent The author is grateful to Jen Hom for her guidance during graduate school and for many discussions that have informed this work. The author also thanks Aru Ray for a helpful conversation on shake concordance.

%----------------------------------------------------%
%----------------------------------------------------%
\section{Shake concordance and zero-surgery}\label{sec:shake}

\begin{definition}[{Bosman \cite[Definition 2.3]{Bos19}}]\label{def:shake}
    Let $L=L_1\cup\cdots\cup L_m$ and $L'=L'_1\cup\cdots\cup L'_m$ be ordered, oriented links, with fixed zero-framed tubular neighbourhoods. The links are \textit{shake concordant} if there are pairwise disjoint, smooth, properly embedded, compact, connected, oriented genus-zero surfaces
    \[
        F_1,\ldots,F_m\subset S^3\times[0,1]
    \]
    with $\partial F_k=(-Q_k)\sqcup Q'_k$, where $Q_k\subset S^3\times\{0\}$ and $Q'_k\subset S^3\times\{1\}$. For some nonnegative integers $a_{kj}$, the link $Q_k$ consists of $2a_{kk}+1$ zero-framed parallel copies of $L_k$, of which $a_{kk}+1$ have its orientation, together with $a_{kj}$ oppositely oriented pairs of zero-framed parallel copies of $L_j$ for every $j\neq k$. The link $Q'_k$ satisfies the same condition with respect to $L'$ and independently chosen integers $b_{kj}\geq 0$.
    
    All copies of a given component, including copies assigned to different surfaces, are distinct parallel curves in its fixed framed neighbourhood. The shake concordance is \textit{strong} if $a_{kj}=b_{kj}=0$ for all $j\neq k$. We write $L\shsim L'$ if the links are joined by a finite chain of shake concordances.
\end{definition}

Recall that oriented closed $3$-manifolds $Y_0,Y_1$ are \emph{smoothly integral homology cobordant} if they cobound a compact smooth oriented $4$-manifold $W$, with $\partial W=(-Y_0)\sqcup Y_1$, such that both boundary inclusions induce homology isomorphisms. For an $m$-component link $L$, its zero-framed linking matrix $\Lambda$ has entries
\[
    \Lambda_{ii}=0,\qquad \Lambda_{ij}=\operatorname{lk}(L_i,L_j)\quad(i\neq j).
\]
Since $H_1(M_L)\cong\operatorname{coker}\Lambda$, unimodularity of $\Lambda$ is equivalent to $M_L$ being an integral homology sphere.

\begin{proposition}[cf. {\cite[Proposition 4.1]{Bos21}}]\label{prop:homcob}
    Suppose that $L$ and $L'$ are shake concordant and that the zero-framed linking matrix of $L$ is unimodular. Then the linking matrix of $L'$ is also unimodular, and $M_L$ and $M_{L'}$ are smoothly integral homology cobordant.
\end{proposition}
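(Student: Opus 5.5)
The plan is to follow the sphere-surgery construction of Cochran--Ray and Bosman. Let $F_1,\dots,F_m\subset S^3\times[0,1]$ be the shake concordance surfaces. Build the $4$-manifold
\[
W=\bigl(S^3\times[0,1]\bigr)\cup\bigl(\text{0-framed 2-handles on }L\times\{0\}\bigr)\cup\bigl(\text{0-framed 2-handles on }L'\times\{1\}\bigr),
\]
viewed so that $\partial W=-M_L\sqcup M_{L'}$ once the trace pieces are turned around appropriately. Equivalently, let $X_L$ and $X_{L'}$ be the zero-framed traces, and glue $-X_L$ to $X_{L'}$ along $S^3\times[0,1]$ to get a cobordism $V$ from $M_L$ to $M_{L'}$.

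In $X_L$, each copy of $L_j$ appearing in $Q_k$ bounds a parallel copy of the core of the $j$-th handle. Since all copies are zero-framed parallels in the fixed neighbourhood, these cores can be taken disjoint. Capping $F_k$ with these cores on both ends gives embedded spheres $S_k\subset V$. Each $S_k$ has trivial normal bundle, because the framings of the parallels are zero and $F_k$ is planar with the induced framings matching. Surgering $V$ along $S_1,\dots,S_m$, by replacing $S^2\times D^2$ with $D^3\times S^1$, produces the candidate cobordism $W$.

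Next comes the homology calculation. First, $H_2(V)\cong\mathbb{Z}^{m}\oplus\mathbb{Z}^{m}$, generated by the handle classes $x_j$ from $X_L$ and $y_j$ from $X_{L'}$ after capping with the concordance region. In fact $H_2(V)$ is generated by the classes of $-X_L$ and $X_{L'}$ together with the Mayer--Vietoris contribution. Since $H_*(S^3\times I)$ is trivial in degrees $1,2$, we get $H_2(V)=H_2(X_L)\oplus H_2(X_{L'})$ and $H_1(V)=0$.

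The sphere $S_k$ represents $x_k+\sum_j 0\cdot(\dots)+y_k$ up to sign. The pairs of oppositely oriented copies cancel, and the net count of copies of $L_k$ is one on each end. So $[S_k]=\pm x_k\pm y_k$, and the $S_k$ span a rank-$m$ summand. Their intersection pairing with the $x_j$ is given by $\Lambda$.

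Surgery along spheres kills $[S_k]$ and creates no new $H_1$ as long as the $[S_k]$ are primitive with dual classes. I then compute $H_*(W)$ and the maps from $H_*(M_L)$ and $H_*(M_{L'})$ using the long exact sequences of the pairs $(W,M_L)$ and $(W,M_{L'})$. By Lefschetz duality these reduce to checking that the matrix of the intersection pairing between $H_2(W)$ and $H_2(W,M_L)$ is an isomorphism. This comes to checking that the map $H_2(M_L)\to H_2(W)$ is an isomorphism and that $H_1(W)\cong H_1(M_L)$. After surgery, $H_2(W)$ is the quotient of $S^\perp$ by $\langle S\rangle$, and $H_1(W)$ is the cokernel of the map $H_2(V)\to\mathbb{Z}^m$ given by pairing with the $S_k$.

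The crucial comparison is this. The composition $H_2(X_L)\to H_2(V)\to\mathbb{Z}^m$, $x_j\mapsto (x_j\cdot S_k)_k$, is $\pm\Lambda$, and likewise $H_2(X_{L'})$ gives $\pm\Lambda'$. If $\Lambda$ is unimodular, this forces $H_1(W)=0$. It also makes $M_L\to W$ a homology isomorphism, since $M_L$ is then a homology sphere and $W$ has $H_1=0$ and $H_2=0$ by the rank count $2m-m-m=0$.

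For $L'$, the linking matrix of $Q'_k$ against $L'$ equals the corresponding data for $Q_k$ against $L$, because the $F_k$ are disjoint planar surfaces and linking numbers of boundary components are preserved through $S^3\times I$. The linking of the net classes gives $\Lambda'=\Lambda$ entrywise; this is essentially Bosman's argument. So $\Lambda'$ is unimodular and $M_{L'}\to W$ is also a homology isomorphism.

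The main obstacle is the bookkeeping in the previous step. One must verify that the canceling pairs contribute nothing to $[S_k]$ or to the linking numbers, and keep the orientation signs straight so that $[S_k]=x_k+y_k$ rather than a class with unwanted multiplicities. I would first do this via the explicit identity $\operatorname{lk}(Q_k,Q_j)$ computed at both ends.
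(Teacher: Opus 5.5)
Your proposal is correct and follows essentially the same route as the paper. You attach zero-framed $2$-handles along $L$ and $L'$ at the two ends of $S^3\times[0,1]$, cap the shake surfaces with parallel cores to get disjoint square-zero spheres of class $x_k\pm y_k$, and surger them. Unimodularity of $\Lambda$ then gives $H_1=0$ and $H_2\cong S^\perp/S=0$. Your derivation of $\Lambda'=\Lambda$ from preservation of linking numbers is the same fact the paper reads off from $s_k\cdot s_l=0$ in the form $(-\Lambda)\oplus\Lambda'$.

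One step should be stated explicitly. The rank count only shows that $S^\perp/S$ is finite. It vanishes because $S$ is a direct summand of $H_2$, so $S^\perp/S$ embeds in the free group $H_2/S$. The degree-three isomorphisms then follow from Lefschetz duality, as you indicate.
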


\begin{proof}
We cap the shake surfaces to obtain disjoint square-zero spheres, perform surgery on these spheres, and use unimodularity to verify that the resulting cobordism is an integral homology cobordism.

Attach zero-framed $2$-handles to $S^3\times[0,1]$ along $L$ and $L'$ at its two ends, obtaining a manifold $W$ with
\[
    \partial W=(-M_L)\sqcup M_{L'}.
\]
The handle calculation gives
\[
    H_1(W)=0,\qquad H_2(W)\cong\mathbb{Z}^{2m},\qquad
    H_3(W)\cong\mathbb{Z},\qquad H_4(W)=0.
\]
Orient the handle cores so that their boundaries agree with the given orientations of the corresponding link components. Capping these cores with suitably oriented Seifert surfaces pushed into the end collars gives a basis $e_1,\ldots,e_m,e'_1,\ldots,e'_m$ for $H_2(W)$. If $\Lambda$ and $\Lambda'$ are the linking matrices of $L$ and $L'$, then the intersection form is
\begin{equation}\label{eq:intersection-form}
    Q_W=(-\Lambda)\oplus\Lambda'.
\end{equation}

Cap every boundary component of each $F_k$ with a distinct parallel copy of the corresponding handle core. After smoothing corners, this gives disjoint embedded spheres $\Sigma_1,\ldots,\Sigma_m$. The signed boundary counts give a contribution of $e_k$ from the bottom caps and $-e'_k$ from the top caps, so
\[
    [\Sigma_k]=s_k=e_k-e'_k.
\]
Disjointness gives $s_k\cdot s_l=0$ for $k\neq l$, while the zero framings give
\[
    s_k^2=-\Lambda_{kk}+\Lambda'_{kk}=0.
\]
Thus the spheres have trivial normal bundles, and their pairings give
\begin{equation}\label{eq:linking-matrices}
    \Lambda=\Lambda'.
\end{equation}
In particular, $\Lambda'$ is unimodular. The subgroup $S=\langle s_1,\ldots,s_m\rangle$ is an isotropic direct summand of $H_2(W)$, since $s_1,\ldots,s_m,e'_1,\ldots,e'_m$ form an integral basis.

Let $X=W\setminus\operatorname{int}\nu(\Sigma_1\cup\cdots\cup\Sigma_m)$. It is connected by general position. Excision identifies $H_j(W,X)$ with $\mathbb{Z}^m$ for $j=2,4$ and with zero otherwise. The map in degree two is
\begin{equation}\label{eq:intersection-map}
    H_2(W)\longrightarrow H_2(W,X)\cong\mathbb{Z}^m,
    \qquad x\longmapsto(x\cdot s_k)_{k=1}^m.
\end{equation}
Its restriction to $\langle e_1,\ldots,e_m\rangle$ is represented by $-\Lambda$, and hence is surjective. The exact sequence of $(W,X)$ gives
\begin{equation}\label{eq:complement}
    H_1(X)=0,\qquad H_2(X)\cong S^\perp,
\end{equation}
as well as a short exact sequence
\begin{equation}\label{eq:tube-classes}
    0\longrightarrow\mathbb{Z}^m\longrightarrow H_3(X)
    \longrightarrow H_3(W)\cong\mathbb{Z}\longrightarrow0.
\end{equation}
The first map in \eqref{eq:tube-classes} sends the standard generators to the classes $[\partial\nu\Sigma_k]$.

The kernel $S^\perp$ of \eqref{eq:intersection-map} has rank $m$ and contains $S$. Since $S$ is a direct summand, $S^\perp/S$ is a torsion-free subgroup of $H_2(W)/S$. It has rank zero, so
\begin{equation}\label{eq:lagrangian}
    S^\perp=S.
\end{equation}
Perform surgery on the spheres by replacing their neighbourhoods $S^2\times D^2$ with $D^3\times S^1$, and denote the resulting manifold by $W'$. The nonzero relative groups $H_j(W',X)$ are $\mathbb{Z}^m$ in degrees three and four. In degree three, the boundary map sends each generator to a parallel copy of $\Sigma_k$. Under \eqref{eq:complement}, these classes correspond to $s_k$, so \eqref{eq:lagrangian} makes this boundary map an isomorphism onto $H_2(X)$. In degree four, the boundary map has image $\langle[\partial\nu\Sigma_k]\rangle$ and is injective by \eqref{eq:tube-classes}. It follows that
\[
    H_1(W')=H_2(W')=H_4(W')=0,\qquad H_3(W')\cong\mathbb{Z}.
\]

The manifold $W'$ is connected, its boundary is still $(-M_L)\sqcup M_{L'}$, and both boundary components are integral homology spheres. Poincar\'e--Lefschetz duality gives $H_3(W',\partial W')\cong H^1(W')=0$. Hence $H_3(\partial W')\cong\mathbb{Z}^2\to H_3(W')\cong\mathbb{Z}$ is surjective, with kernel generated by the boundary of the relative fundamental class. This generator has coefficient $\pm1$ on each boundary component, so each component maps isomorphically onto $H_3(W')$. Connectedness and the remaining homology groups show that both boundary inclusions are homology isomorphisms.
\end{proof}

\begin{corollary}\label{cor:chain}
    On links with unimodular zero-framed linking matrix, the assignment
    \[
        L\longmapsto[M_L]\in\ThZ
    \]
    is constant on shake-equivalence classes. It is also unchanged by reordering or reorienting components.
\end{corollary}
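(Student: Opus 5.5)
The corollary will follow from Proposition \ref{prop:homcob}. I will also check separately that reordering and reorientation do not change the oriented diffeomorphism type of $M_L$.

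\emph{Constancy on shake-equivalence classes.} Suppose $L\shsim L'$ with $L$ unimodular. Choose a chain $L=L^{(0)},L^{(1)},\ldots,L^{(r)}=L'$ in which consecutive links are shake concordant. I will argue by induction on $i$. If $L^{(i)}$ has unimodular zero-framed linking matrix, then Proposition \ref{prop:homcob} shows that $L^{(i+1)}$ is also unimodular, and that $M_{L^{(i)}}$ and $M_{L^{(i+1)}}$ are smoothly integral homology cobordant. One point needs checking: shake concordance in Definition \ref{def:shake} is not visibly symmetric. I only need to apply the proposition in the direction the chain provides, so I will check it is used with the unimodular link as the source. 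If a link in the chain is shake concordant \emph{to} its predecessor, I reverse the concordance by flipping $S^3\times[0,1]$ via $t\mapsto1-t$. This swaps the roles of the $Q_k$ and $Q'_k$, and the conditions on them are symmetric. Reversing all surface orientations restores the boundary convention $\partial F_k=(-Q_k)\sqcup Q'_k$. So shake concordance is symmetric, and the induction goes through in either direction. Integral homology cobordism is an equivalence relation: symmetry comes from reversing $W$, and transitivity from gluing along a common boundary and using Mayer--Vietoris, since all manifolds involved are $\mathbb{Z}$-homology spheres. Hence every $M_{L^{(i)}}$ is a homology sphere in the same class of $\ThZ$ as $M_L$.

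\emph{Reordering and reorienting.} Zero-framed surgery on a component depends only on the unoriented knot and the framing $0$. The zero framing, meaning the Seifert longitude, is unchanged when a component's orientation is reversed. Surgery on all components at once is insensitive to their ordering. So $M_L$, with its orientation induced from $S^3$, is literally the same oriented manifold. Reorienting a component negates the corresponding row and column of $\Lambda$, and reordering permutes rows and columns, so unimodularity is preserved. The class in $\ThZ$ is therefore well defined and unchanged under both operations.

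I expect no real obstacle here. The only delicate point is the symmetry of shake concordance, so that chains can be traversed in either direction. I would dispatch it by the explicit flip of the product cobordism described above.
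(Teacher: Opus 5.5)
Your proof is correct and follows the paper's argument: you apply Proposition~\ref{prop:homcob} to consecutive links of the chain, compose the resulting homology cobordisms, and note that $M_L$ is literally unchanged by reordering or reorienting. The only difference is minor. You prove that shake concordance is symmetric by the flip $t\mapsto 1-t$ and reversing surface orientations, whereas the paper observes that $\Lambda=\Lambda'$ in \eqref{eq:linking-matrices} is derived without using unimodularity, so every link in the chain is unimodular and the proposition applies to each consecutive pair in whichever direction it is given.
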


\begin{proof}
By \eqref{eq:linking-matrices}, unimodularity is preserved along finite chains of shake concordances. Applying Proposition \ref{prop:homcob} to consecutive links and composing the resulting homology cobordisms proves the first assertion. Reordering or reorienting a surgery link leaves its oriented surgery manifold unchanged.
\end{proof}

For locally knotted Hopf links, the matrix is
\[
    \begin{pmatrix}0&\varepsilon\\\varepsilon&0\end{pmatrix},
    \qquad \varepsilon\in\{1,-1\},
\]
so Corollary \ref{cor:chain} applies to every link considered in Theorem \ref{thm:main}.

%----------------------------------------------------%
%----------------------------------------------------%
\section{Whitehead-link surgery identities}\label{sec:whitehead}

%----------------------------------------------------%
\subsection{Splices and satellite exteriors}
For an oriented knot $K$, write $E(K)=S^3\setminus\operatorname{int}\nu(K)$, and let $\mu_K,\lambda_K$ be its positive meridian and preferred Seifert longitude. Our surgery convention is
\[
    S^3_{p/q}(K)=E(K)(p\mu_K+q\lambda_K),
\]
where $p,q$ are coprime integers and the right-hand side is Dehn filling along the indicated unoriented slope. If $C$ is a component of a link with exterior $Y$, write $Y(C;\alpha)$ for filling the $C$-boundary along $\alpha$.

A knot $K$ is \textit{reversible} if an orientation-preserving diffeomorphism of $S^3$ carries it to itself with the opposite orientation. Such a diffeomorphism induces $-\operatorname{id}$ on $H_1(\partial E(K))$. For knot exteriors $E(J)$ and $E(K)$, the \textit{standard splice gluing} is the orientation-reversing boundary map specified, up to isotopy, by
\[
    \mu_J\longmapsto\lambda_K,\qquad \lambda_J\longmapsto\mu_K.
\]
If either knot is reversible, reversing both signs does not change the oriented diffeomorphism type of the resulting manifold.

% The Hopf-link surgery description
\begin{lemma}\label{lem:splice}
    The manifold $M_{\mathcal{H}(J,K)}$ is obtained by gluing $E(J)$ to $E(K)$ by a map of the form
    \begin{equation}\label{eq:splice-map}
        \mu_J\longmapsto\varepsilon\lambda_K,
        \qquad \lambda_J\longmapsto\varepsilon\mu_K,
        \qquad \varepsilon\in\{1,-1\}.
    \end{equation}
    If either knot is reversible, the standard splice gluing may be used. In particular,
    \[
        M_{\mathcal{H}(U,K)}\cong S^3
    \]
    for every knot $K$.
\end{lemma}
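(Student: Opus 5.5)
We plan to work directly from a surgery picture of the positive Hopf link. Because $J$ and $K$ are tied into the two components inside disjoint balls, the exterior of $\mathcal{H}(J,K)$ decomposes along two swallow-follow tori. Equivalently, it is obtained from the exterior $E(H)\cong T^2\times[0,1]$ of the Hopf link by gluing $E(J)$ and $E(K)$ to the two boundary tori via the usual satellite identifications. These identifications send the meridian of each Hopf component to the meridian of the local knot. They send the zero-framed longitude to the Seifert longitude, since tying in a local knot does not change the framing. We would first record this decomposition and the meridian--longitude bookkeeping on $\partial E(H)$. For the positive Hopf link $H=H_1\cup H_2$ we have $\operatorname{lk}=+1$, so in $T^2\times[0,1]$ the zero-framed longitude $\lambda_1$ of $H_1$ is isotopic to the meridian $\mu_2$ of $H_2$, and $\lambda_2$ is isotopic to $\mu_1$, up to signs fixed by the orientation conventions.

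Next we perform the zero-surgeries. Filling the $H_1$-boundary of $\mathcal{H}(J,K)$ along $\lambda_1$ is the same as gluing $E(J)$ along its boundary, in the form $E(J)(\lambda_J)$ glued to the rest. The cleaner route is to do both surgeries at once. The manifold $M_{\mathcal{H}(J,K)}$ is $E(J)\cup (T^2\times I)\cup E(K)$ with both zero-fillings, but the fillings happen on the local knots' original tori. So we reinterpret the construction as follows. The zero-surgery on a Hopf component with local knot $J$ replaces $\nu(H_1)$ in the satellite picture, and the complement of $\nu(H_1)\cup\nu(H_2)$ in $S^3$, after removing the local knots' balls, is a product $T^2\times I$. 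Concretely, $M_{\mathcal{H}(J,K)}$ is obtained from $E(J)\sqcup E(K)$ by identifying their boundaries through the product region. Tracking curves through $T^2\times I$, the slope $\mu_J$ is carried to the Hopf meridian $\mu_1\sim\lambda_2$, which equals $\lambda_K$ up to sign. Likewise $\lambda_J$, which is the filling slope on the $H_1$ side, is carried to $\lambda_1\sim\mu_2=\mu_K$. The result is a map of the form \eqref{eq:splice-map}. The identification must be orientation-reversing, since the two pieces induce opposite boundary orientations. This forces the two signs to agree, so only one sign $\varepsilon$ is free, because an orientation-reversing map on $H_1(T^2)$ exchanging the basis elements has determinant $-1$.

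The remaining assertions follow from this. If $J$ or $K$ is reversible, its reversing diffeomorphism acts on its boundary torus by $-\mathrm{id}$. Composing with it changes $\varepsilon$ to $-\varepsilon$ without changing the oriented diffeomorphism type, so the standard splice gluing ($\varepsilon=1$) may be used. For $J=U$, the unknot is reversible and $E(U)$ is a solid torus whose meridian disk bounds $\lambda_U$. The gluing sends $\lambda_U\mapsto\mu_K$, so the result is the filling $E(K)(\mu_K)=S^3$, oriented compatibly with $S^3$.

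The main obstacle is the sign and orientation bookkeeping: confirming that the two signs in \eqref{eq:splice-map} agree, and that the whole gluing is orientation-reversing rather than just some unimodular map. We would settle this with the unknot check. For $J=K=U$ the construction must give $M_H\cong S^3$, and for $J=U$ with $K$ arbitrary the meridian-filling must recover $S^3$ with its standard orientation. These checks pin down the determinant, and we leave $\varepsilon$ itself undetermined, exactly as the lemma allows.
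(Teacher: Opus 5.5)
Your handling of the signs (they agree because the gluing reverses orientation), of reversibility, and of the case $J=U$ matches the paper's proof. The gap is in the first step. The paper simply cites \cite{KLT} for the fact that zero-surgery on $\mathcal{H}(J,K)$ is the meridian--longitude splice. You try to derive it, and the derivation as written does not work.

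\emph{The exterior description is false.} Gluing $E(J)$ and $E(K)$ to both boundary tori of $E(H)\cong T^2\times[0,1]$ produces a closed manifold, whereas $E(\mathcal{H}(J,K))$ has two boundary tori. Nor is $\mu_1\leftrightarrow\mu_J$, $\lambda_1\leftrightarrow\lambda_J$ a satellite identification. Tying $J$ into $H_1$ is realized by removing a neighbourhood of a small meridian circle $m_1$ of $H_1$ and gluing in $E(J)$ with $\mu_{m_1}\leftrightarrow\lambda_J$ and $\lambda_{m_1}\leftrightarrow\mu_J$. So the exterior is $E(H\cup m_1\cup m_2)\cup E(J)\cup E(K)$, not $E(J)\cup E(H)\cup E(K)$.

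\emph{The key identity is only asserted.} The closed manifold $E(J)\cup E(H)\cup E(K)$ you write down is in fact $M_{\mathcal{H}(J,K)}$, but that identity is precisely the content of the lemma. Your second paragraph supports it only with incorrect statements: $E(J)(\lambda_J)$ is closed and cannot be glued to anything, and $E(H)$ with balls removed is not a product (those balls also meet the link).

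To close the gap, either cite \cite{KLT} or prove the local statement you are implicitly using. If $J$ is tied into a component $C$ of a link, then zero-surgery on $C\# J$, with the other components untouched, is obtained by removing $\nu(C)$ and gluing in $E(J)$. The gluing sends meridian to meridian and zero-framed longitude to Seifert longitude, up to the sign needed to make it orientation-reversing. One proof runs as follows.
\begin{enumerate}
\item Write the link exterior as $E(C)\cup_A E(J)$ along a meridional annulus $A$. The boundary of the zero-framed filling solid torus $V$ is then $A_C\cup A_J$, whose cores are meridians of $C\# J$ and hence longitudes of $V$.
\item Cut $V$ along a properly embedded annulus $B$ with $\partial B=\partial A$ into solid tori $V_C,V_J$ in which $A_C,A_J$ are longitudinal. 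Absorbing collars gives $E(C)\cup V_C\cong E(C)$ and $E(J)\cup V_J\cong E(J)$, glued along the torus $A\cup B$.
\item Meridians are matched through $A$. Choose $B$ so that a meridian disk of $V$ meets it in one arc, and choose Seifert longitudes with $\lambda_C\cap A=\lambda_J\cap A$. Then $\lambda_C$ and $\lambda_J$ are both isotopic to that arc together with the arc on $B$, so they are matched up to sign.
\end{enumerate}
Applying this to both Hopf components and using $\mu_1\sim\pm\lambda_2$, $\lambda_1\sim\pm\mu_2$ in $T^2\times[0,1]$ gives \eqref{eq:splice-map}. After that, your sign, reversibility and unknot arguments go through. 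The unknot checks in your last paragraph are unnecessary and do not pin down the determinant; orientability of the glued manifold does, as you already observed.
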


\begin{proof}
The zero-surgery description of a meridian-longitude splice is the Hopf link with the two knots tied into its components; see \cite[proof of Lemma 2.1 and Figure 1]{KLT}. The gluing exchanges the meridian and longitude slopes, and the two signs agree because it reverses the boundary orientation. This gives \eqref{eq:splice-map}. If either knot is reversible, an orientation-preserving diffeomorphism of its exterior negates both peripheral classes. Composing the gluing with this diffeomorphism changes both signs, so the standard splice gluing may be used. Finally, $E(U)$ is a solid torus with meridian $\lambda_U$, which is identified with $\varepsilon\mu_K$. Since filling slopes are unoriented, the resulting manifold is the meridional filling of $E(K)$, namely $S^3$.
\end{proof}

Let $\mathcal{W}_+=P\cup A$ be the Whitehead link whose component $P$ is the positive-clasped, untwisted pattern in the solid torus
\[
    V=S^3\setminus\operatorname{int}\nu(A).
\]
Both components are unknotted and $\operatorname{lk}(P,A)=0$. A meridian and longitude for $V$ are $\lambda_A$ and $\mu_A$, respectively. For $t\in\mathbb{Z}$, an orientation-preserving diffeomorphism $h_t\colon V\to\nu(J)$ with
\begin{equation}\label{eq:pattern}
    h_t(\lambda_A)=\mu_J,
    \qquad h_t(\mu_A)=\lambda_J+t\mu_J
\end{equation}
defines $D_+(J,t)=h_t(P)$. This agrees with the convention in \cite[Figure 1]{Hed}. We normalize the Alexander polynomial by $\Delta_K(x)=\Delta_K(x^{-1})$ and $\Delta_K(1)=1$. The standard genus-one Seifert surface and its Seifert matrix give
\begin{equation}\label{eq:whitehead-basic}
    g_4(D_+(J,t))\leq 1,
    \qquad \Delta_{D_+(J,t)}(x)=-tx+(2t+1)-tx^{-1}.
\end{equation}
Here $g_4$ is the smooth slice genus. In particular, $\Delta_{\mathcal{D}(J)}=1$ for every $J$. The link and pattern conventions are illustrated in Figure \ref{fig:conventions}.

\begin{figure}[htbp]
\centering
    \begin{tikzpicture}[font=\small,scale=1.00,link strand/.style={draw=black,line width=.8pt,line cap=round,line join=round},crossing/.style={link strand,preaction={draw=white,line width=3.6pt,line cap=butt}}]
        \begin{scope}
        % Left clockwise, right counterclockwise.
        \draw[link strand,postaction={decorate},decoration={markings,mark=at position .62 with {\arrow{stealth}}}] (.2,-.72) arc[start angle=0,end angle=-360,x radius=.7,y radius=1.1];
        \draw[link strand,postaction={decorate},decoration={markings,mark=at position .11 with {\arrow{stealth}}}] (1.2,-.72) arc[start angle=0,end angle=360,x radius=.7,y radius=1.1];
        % Right component over at the top, left over at the bottom.
        \draw[crossing] (.15,.232628) arc[start angle=120,end angle=152,x radius=.7,y radius=1.1];
        \draw[crossing] (.118063,-1.236419) arc[start angle=332,end angle=298,x radius=.7,y radius=1.1];
        \filldraw[fill=white,line width=.5pt] (-1.48,-.96) rectangle (-.92,-.48);
        \node at (-1.2,-.72) {$J$};
        \filldraw[fill=white,line width=.5pt] (.92,-.96) rectangle (1.48,-.48);
        \node at (1.2,-.72) {$K$};
        \node at (0,-2.17) {$\mathcal H(J,K)$};
        \end{scope}
        \begin{scope}[shift={(6,-.43)}]
        \draw[link strand] (-1.25,.02) ellipse[x radius=.82,y radius=.4];
        % Outer arc of P
        \draw[crossing,postaction={decorate},decoration={markings,mark=at position .77 with {\arrow{stealth}}}]
        (-.55,1.80) .. controls (-1.04,2.01) and (-1.60,1.10) .. (-1.60,.10)
        .. controls (-1.60,-.98) and (-1.10,-1.45) .. (0,-1.45)
        .. controls (1.10,-1.45) and (1.60,-.98) .. (1.60,.10)
        .. controls (1.60,1.10) and (1.04,2.01) .. (.55,1.80);
        % Inner arc of P
        \draw[crossing,postaction={decorate},decoration={markings,mark=at position .875 with {\arrow{stealth}}}]
        (.55,.30) .. controls (.97,.12) and (.97,-.70) .. (0,-.70)
        .. controls (-.97,-.70) and (-.97,.12) .. (-.55,.30);
        % The two clasp strands
        \draw[link strand] (-.55,1.80) .. controls (-.20,1.65) and (.32,1.30) .. (.32,1.05) .. controls (.32,.80) and (-.20,.45) .. (-.55,.30);
        \draw[link strand] (.55,1.80) .. controls (.20,1.65) and (-.32,1.30) .. (-.32,1.05) .. controls (-.32,.80) and (.20,.45) .. (.55,.30);
        % Hedden's positive clasp
        \draw[crossing] (-0.144388,1.578415) .. controls (-0.044596,1.512347) and (0.050150,1.439950) .. (0.127725,1.366495);
        \draw[crossing] (-0.127725,0.733505) .. controls (-0.050150,0.660050) and (0.044596,0.587653) .. (0.144388,0.521585);
        % The upper arc of A passes over both strands of P and is directed to the right.
        \draw[crossing,postaction={decorate},decoration={markings,mark=at position .5 with {\arrow{stealth}}}] (-2.07,.02) arc[start angle=180,end angle=0,x radius=.82,y radius=.4];
        \node at (-2.10,-.51) {$A$};
        \node at (1.83,.10) {$P$};
        \node[font=\scriptsize] at (0,2.25) {positive clasp};
        \draw[->,line width=.4pt] (0,2.03)--(0,1.62);
        \node at (0,-1.74) {$\mathcal W_+=P\cup A$};
        \node[font=\scriptsize] at (0,-2.27) {$(\mu_V,\lambda_V)=(\lambda_A,\mu_A)$};
        \end{scope}
    \end{tikzpicture}
    \caption{Local knotting and Whitehead-pattern conventions. In the left panel, each box represents an oriented one-strand local knot tied in a disjoint ball; the two crossings outside the boxes form the positive Hopf clasp. In the right panel, $P$ is the positive untwisted Whitehead pattern in $V=S^3\setminus\operatorname{int}\nu(A)$. The peripheral convention is specified by \eqref{eq:pattern}; the diagram uses the positive clasp convention of \cite{Hed}.}
    \label{fig:conventions}
\end{figure}

\begin{lemma}\label{lem:satellite}
    For every knot $J$,
    \[
        E(\mathcal{D}(J))\cong E(J)\cup_A E(\mathcal{W}_+),
    \]
    where the gluing identifies $\mu_J$ with $\lambda_A$ and $\lambda_J$ with $\mu_A$. On the remaining boundary, $\mu_P$ and $\lambda_P$ correspond to the meridian and preferred longitude of $\mathcal{D}(J)$, respectively.
\end{lemma}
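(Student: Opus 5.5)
The plan is to unwind the definition of $\mathcal{D}(J)=h_0(P)$ as a satellite and track peripheral curves. By \eqref{eq:pattern} with $t=0$, the map $h_0\colon V\to\nu(J)$ is an orientation-preserving diffeomorphism with $h_0(\lambda_A)=\mu_J$ and $h_0(\mu_A)=\lambda_J$. Since $\mathcal{D}(J)=h_0(P)\subset\nu(J)$, we decompose
\[
    E(\mathcal{D}(J))=E(J)\cup_{\partial\nu(J)}\bigl(\nu(J)\setminus\operatorname{int}\nu(\mathcal{D}(J))\bigr),
\]
and $h_0$ restricts to a diffeomorphism from $V\setminus\operatorname{int}\nu(P)=E(\mathcal{W}_+)$ onto the second piece, taking $\nu(P)$ to $\nu(\mathcal{D}(J))$. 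Pulling back along $h_0$ replaces the second piece by $E(\mathcal{W}_+)$. The gluing is then along the $A$-boundary torus, where $\lambda_A\mapsto\mu_J$ and $\mu_A\mapsto\lambda_J$, which is the asserted identification. Orientations are consistent because $h_0$ preserves orientation and both $V$ and $\nu(J)$ inherit their orientations from $S^3$.

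For the remaining boundary, first note that $h_0$ carries a small meridian disk of $P$ to a meridian disk of $h_0(P)$. With compatible orientations this gives $h_0(\mu_P)=\mu_{\mathcal{D}(J)}$, because $h_0$ is orientation-preserving and carries the oriented knot $P$ to the oriented knot $\mathcal{D}(J)$.

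The longitude is the main point. The preferred longitude $\lambda_P$ is characterized by $\operatorname{lk}(\lambda_P,P)=0$ in $S^3$, equivalently by being null-homologous in $E(P)\subset S^3$. We need to show that $h_0(\lambda_P)$ is null-homologous in $E(\mathcal{D}(J))$. Since $\operatorname{lk}(P,A)=0$, the pattern $P$ bounds a Seifert surface $F$ disjoint from $A$, so $F$ lies in $V$. Concretely, we take the standard genus-one surface of the Whitehead pattern, which lies inside the solid torus $V$. Its boundary pushoff gives $\lambda_P$, because the surface framing of $F$ agrees with the Seifert framing in $S^3$.

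Then $h_0(F)\subset\nu(J)$ is a surface bounded by $\mathcal{D}(J)$, and it induces the framing $h_0(\lambda_P)$. This framing is the Seifert framing of $\mathcal{D}(J)$, since any embedded oriented surface bounded by a knot in $S^3$ determines the zero framing. Hence $h_0(\lambda_P)=\lambda_{\mathcal{D}(J)}$.

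The only real obstacle is the existence of $F$ inside $V$ with the correct framing. This follows either from the picture in Figure~\ref{fig:conventions} or, in general, from winding number zero. In the general argument, a Seifert surface for $P$ in $S^3$ meets $A$ algebraically zero times, and tubing away the intersections gives a surface in $V$. The $t=0$ normalization in \eqref{eq:pattern} ensures that no extra twisting shifts the longitude by a multiple of $\mu$.
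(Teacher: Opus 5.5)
Your proposal is correct and follows essentially the same route as the paper: you decompose the exterior via $h_0$, read the gluing off \eqref{eq:pattern} with $t=0$, obtain the meridian from the orientation-preserving embedding, and obtain the longitude from a Seifert surface for $P$ tubed into $V$ using $\operatorname{lk}(P,A)=0$. One small correction: the $t=0$ normalization matters only for the gluing on the $A$-boundary, not for the longitude, since $h_t(F)$ is a Seifert surface for $D_+(J,t)$ for every $t$.
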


\begin{proof}
The exterior decomposition and gluing map follow from \eqref{eq:pattern} with $t=0$. Since $\operatorname{lk}(P,A)=0$, a Seifert surface for $P$ can be tubed along pairs of oppositely signed intersections with $A$, keeping its boundary collar fixed, to obtain a Seifert surface $F\subset V$. The image $h_0(F)$ is a Seifert surface for $\mathcal{D}(J)$, so $h_0$ sends $\lambda_P$ to its preferred longitude. The meridian identification follows directly from the orientation-preserving embedding $h_0$.
\end{proof}

%----------------------------------------------------%
\subsection{Symmetries and peripheral signs}
We use two types of symmetry of the Whitehead link: exchange of its components, and reversal of either component while the other is preserved. We record the associated peripheral actions (i.e., their actions on the meridian-longitude pairs).

\Needspace{15\baselineskip}
\begin{lemma}\label{lem:symmetries}
    There are orientation-preserving diffeomorphisms of $(S^3,\mathcal{W}_+)$ with the following properties:
    \begin{enumerate}
        \item $\sigma$ exchanges $P$ and $A$, preserving their orientations and their meridian--longitude pairs;
        \item $\rho_P$ fixes both components setwise, reverses the orientation of $P$, and preserves that of $A$; its peripheral actions are $-\operatorname{id}$ at $P$ and $\operatorname{id}$ at $A$;
        \item $\rho_A$ fixes both components setwise, preserves the orientation of $P$, and reverses that of $A$; its peripheral actions are $\operatorname{id}$ at $P$ and $-\operatorname{id}$ at $A$.
    \end{enumerate}
    Here the peripheral actions are the induced maps on the first homology of the boundary tori.
\end{lemma}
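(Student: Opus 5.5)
We will realize the Whitehead link as a symmetric, explicitly drawn configuration and read off each symmetry as a rigid motion or isotopy of $S^3$, then compute peripheral actions from orientation bookkeeping. The key general principle is this: an orientation-preserving diffeomorphism $f$ of $S^3$ carrying an oriented component $C$ to an oriented component $C'$ with orientation sign $\delta\in\{\pm1\}$ sends $\mu_C\mapsto\delta\mu_{C'}$ and $\lambda_C\mapsto\delta\lambda_{C'}$. Indeed, the meridian is determined by the linking number $\operatorname{lk}(\mu_C,C)=1$, which $f$ preserves because it preserves the ambient orientation. The preferred longitude is the unique parallel null-homologous in the exterior, and $f$ carries Seifert surfaces to Seifert surfaces. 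Hence each of (i)--(iii) reduces to producing an orientation-preserving homeomorphism of the pair with the prescribed effect on component orientations, and the peripheral statements follow automatically.

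For (ii) and (iii), we use the standard strongly invertible picture of $\mathcal{W}_+$. Draw $A$ as a round circle in a plane $\Pi$, and draw $P$ as the clasped band, symmetric under the $\pi$-rotation $r$ about an axis in $\Pi$ that meets $A$ in two points and meets $P$ in two points (through the clasp and through the opposite arc). Such a rotation preserves each component setwise and reverses both orientations. We also need a rotation reversing only one component. For this, take a $\pi$-rotation about an axis perpendicular to $\Pi$ through the centre of $A$. It preserves $A$ with its orientation. If $P$ is drawn symmetrically under it, with its two strands exchanged, it reverses $P$, since the two strands of the pattern run in opposite directions around $A$; this is consistent with $\operatorname{lk}(P,A)=0$. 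This rotation is $\rho_P$, and $\rho_A$ is the composite $r\circ\rho_P$. Rotations are orientation-preserving, and the sign principle gives the stated peripheral actions.

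For (i), we use the classical fact that the Whitehead link is interchangeable: the two components are isotopic to each other in $S^3$ via an isotopy carrying the link to itself. This can be exhibited by the symmetric (two-fold symmetric, braid-closure) diagram of the Whitehead link, where a rotation swaps the components; alternatively, the hyperbolic structure's symmetry group $D_4$ includes a component exchange. Composing with $\rho_P$ and/or $\rho_A$ if necessary, we arrange that the exchange carries the oriented $P$ to the oriented $A$ and the oriented $A$ to the oriented $P$. The sign principle then shows that the meridian--longitude pairs are preserved.

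The main obstacle is checking that the symmetric diagrams really represent $\mathcal{W}_+$ with Hedden's positive clasp, rather than its mirror. Orientation-preserving symmetries cannot change the clasp sign, so it suffices to verify the clasp sign once, in the chosen symmetric diagram, by comparing with Figure \ref{fig:conventions}. The components are each unknotted, so the exchange in (i) is plausible; the care needed is in ensuring that the exchange diagram has the same chirality. We will check this by computing a chirality-sensitive invariant, such as the sign of the clasp crossings in $P$ viewed inside the solid torus $V$, on both sides.
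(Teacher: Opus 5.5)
Your argument is correct, and it differs from the paper mainly in where the symmetries come from. The paper takes a pure exchange and one single-component reversal from the explicit isotopies of \cite{BCC}. It then obtains the other reversal by conjugating with the exchange, and covers the other chirality by conjugating with a reflection. The peripheral actions follow from the same sign principle you state.

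You instead realize the reversals as explicit $\pi$-rotations: $\rho_P$ about the core of $V$, the strong inversion $r$, and $\rho_A=r\circ\rho_P$. You then build $\sigma$ from an arbitrary exchange by correcting orientations with reversals, so for (i) you need only interchangeability rather than a specific pure exchange. This is more self-contained for (ii)--(iii). Part (i), however, still rests on a classical fact, just as the paper's proof rests on \cite{BCC}.

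Two points need tightening.
\begin{itemize}
\item You leave the existence of $\rho_P$ conditional (``if $P$ is drawn symmetrically''), so it should be justified. It does hold. In toroidal coordinates about the round circle $A$, the rotation acts on $V\cong S^1\times D^2$, with its untwisted product structure, as $\mathrm{id}\times R_\pi$. It therefore acts on the annular projection of $P$ as reflection across the core circle with all crossings switched. The standard clasp diagram is invariant under this operation: take strands $S^1\times\{\pm\tfrac12\}$, and make each hook symmetric across the core, meeting it once.
\item Your ``main obstacle'' is not an obstacle. Conjugation by a reflection of $S^3$ carries each symmetry of a link to a symmetry of its mirror, with the same effect on component orientations. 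This is exactly how the paper avoids any chirality check.
\end{itemize}

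A minor inaccuracy: in the round-$A$ picture, an intersection-sign count forces the two fixed points of $r$ on $P$ to be where $P$ pierces the flat disk bounded by $A$, not a point on the clasp.
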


\begin{proof}
The pure exchange and a single-component reversal are realized by explicit isotopies in \cite[Claim 7.14 and Figures B5 and C16]{BCC}. Conjugating the reversal by the exchange gives the reversal of the other component. Conjugation by a reflection gives the same symmetries for the other chirality of the Whitehead link. After isotopy near the link, we may assume that these symmetries carry the chosen tubular neighbourhoods to the corresponding tubular neighbourhoods. An orientation-preserving ambient symmetry preserves both oriented peripheral classes when it preserves the component orientation, and negates both when it reverses that orientation. This determines the stated actions.
\end{proof}

Peripheral identifications will be used up to isotopy of the boundary tori. An orientation-preserving torus diffeomorphism inducing the identity on first homology is isotopic to the identity. Thus, whenever necessary, we adjust the maps in boundary collars before gluing; these adjustments do not change the stated peripheral actions.

\begin{lemma}\label{lem:reversibility}
    For every knot $J$ and every $t\in\mathbb{Z}$, the Whitehead double $D_+(J,t)$ is reversible. In particular, $\mathcal{D}(J)$ and all the knots $D_+(U,-n)$ are reversible.
\end{lemma}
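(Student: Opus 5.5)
We realize the reversal of $D_+(J,t)=h_t(P)$ by an ambient symmetry that is the identity outside $\nu(J)$, built from the symmetry $\rho_P$ of Lemma \ref{lem:symmetries}. Recall $S^3=E(J)\cup_{h_t}V$, where $V=S^3\setminus\operatorname{int}\nu(A)$ is glued to $E(J)$ along $\partial V=\partial\nu(A)$ via \eqref{eq:pattern}. So it suffices to find an orientation-preserving diffeomorphism of $V$ that reverses $P$ and restricts to a map of $\partial V$ isotopic to the identity. After an isotopy in a collar of $\partial V$ we may take it to be the identity on $\partial V$, extend it by the identity over $E(J)$, and transport it by $h_t$.

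\textbf{Step 1.} By Lemma \ref{lem:symmetries}(ii), $\rho_P$ is an orientation-preserving diffeomorphism of $(S^3,\mathcal{W}_+)$. It reverses $P$, preserves $A$ setwise and with its orientation, and acts on $H_1(\partial\nu(A))$ as the identity. After isotopy we may assume that $\rho_P$ preserves $\nu(A)$, so it restricts to an orientation-preserving self-diffeomorphism of $V$ carrying $P$ to itself with reversed orientation.

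\textbf{Step 2.} The restriction of $\rho_P$ to the torus $\partial V$ is orientation-preserving and induces the identity on first homology, so it is isotopic to the identity. Here we use the remark following Lemma \ref{lem:symmetries}. We absorb this isotopy in a collar of $\partial V$ disjoint from $P$, obtaining $\rho'\colon V\to V$ with $\rho'|_{\partial V}=\operatorname{id}$ and $\rho'(P)=-P$.

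\textbf{Step 3.} Define $\Phi\colon S^3\to S^3$ to be $h_t\circ\rho'\circ h_t^{-1}$ on $\nu(J)$ and the identity on $E(J)$. The two pieces agree on $\partial\nu(J)$, so $\Phi$ is a homeomorphism, and it is smooth after the usual collar smoothing. It is orientation-preserving because $h_t$ and $\rho'$ both are. It carries $D_+(J,t)=h_t(P)$ to itself with the opposite orientation, so $D_+(J,t)$ is reversible. The special cases follow by taking $t=0$, and by taking $J=U$, $t=-n$.

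The main point requiring care is Step 2: we need the peripheral action of $\rho_P$ at $A$ to be exactly $\operatorname{id}$, and not $-\operatorname{id}$, so that it extends by the identity. This is exactly the content of Lemma \ref{lem:symmetries}(ii). An action of $-\operatorname{id}$ would not extend over $E(J)$ unless $J$ itself were reversible.
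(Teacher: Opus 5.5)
Your proof is correct and is essentially the paper's argument: restrict $\rho_P$ to $V$, use its identity peripheral action at $A$ to isotope it, in a collar disjoint from $P$, to the identity near $\partial V$, then conjugate by $h_t$ and extend by the identity over $E(J)$. The only cosmetic difference is that the paper makes the map the identity on a whole collar of $\partial V$, not just on $\partial V$, so the glued map is already smooth and no collar-smoothing step is needed.
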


\begin{proof}
Restrict $\rho_P$ to $V$. This map reverses the oriented pattern $P$ and induces the identity on $H_1(\partial V)$. Its boundary restriction is therefore isotopic to the identity. Modify the map in a collar of $\partial V$, disjoint from $P$, so that it is the identity near the boundary. Conjugating by $h_t$ gives an orientation-preserving diffeomorphism of $\nu(J)$ that reverses $D_+(J,t)$ and is the identity near $\partial\nu(J)$. Extending by the identity on $E(J)$ gives an orientation-preserving diffeomorphism of $S^3$ reversing $D_+(J,t)$.
\end{proof}

%----------------------------------------------------%
\subsection{Twist-knot fillings}
The following filling calculation fixes the sign in \eqref{eq:twist}.

\begin{lemma}\label{lem:filling}
    For every nonzero integer $n$, there is an orientation-preserving diffeomorphism
    \begin{equation}\label{eq:twist-exterior}
        E(D_+(U,-n))\cong E(\mathcal{W}_+)(P;\mu_P+n\lambda_P).
    \end{equation}
    Under this identification, the meridian--longitude pair on the remaining $A$-boundary corresponds to the meridian and preferred longitude of $D_+(U,-n)$, up to simultaneous sign.
\end{lemma}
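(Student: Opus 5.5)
We will derive \eqref{eq:twist-exterior} from the exchange symmetry $\sigma$ of Lemma \ref{lem:symmetries} together with the Rolfsen twist that converts surgery on an unknotted component into twisting. By definition $D_+(U,-n)=h_{-n}(P)$, where $h_{-n}\colon V\to\nu(U)$ satisfies \eqref{eq:pattern} with $t=-n$. Thus
\[
E(D_+(U,-n))=E(U)\cup_{h_{-n}} (V\setminus\operatorname{int}\nu(P)) = E(U)\cup E(\mathcal W_+),
\]
where $\mu_U$ is glued to $\lambda_A$ and $\lambda_U$ to $\mu_A-n\lambda_A$. Since $E(U)$ is a solid torus whose meridian is $\lambda_U$, this gluing is the Dehn filling of the $A$-boundary of $E(\mathcal W_+)$ along the slope $\mu_A-n\lambda_A$. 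Hence
\[
E(D_+(U,-n))\cong E(\mathcal W_+)(A;\mu_A-n\lambda_A),
\]
and the peripheral pair of $D_+(U,-n)$ is $(\mu_P,\lambda_P)$. For the longitude, we argue as in Lemma \ref{lem:satellite}: $\operatorname{lk}(P,A)=0$ lets a Seifert surface for $P$ be chosen inside $V$, so $\lambda_P$ is the preferred longitude. The meridian claim follows directly.

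Next we apply $\sigma$. Because $\sigma$ exchanges $P$ and $A$ while preserving the meridian--longitude pairs, it induces an orientation-preserving diffeomorphism
\[
E(\mathcal W_+)(A;\mu_A-n\lambda_A)\cong E(\mathcal W_+)(P;\mu_P-n\lambda_P),
\]
and it carries $(\mu_P,\lambda_P)$ to $(\mu_A,\lambda_A)$ on the remaining boundary. The slope $\mu_P-n\lambda_P$ is not the one in \eqref{eq:twist-exterior}; we need $\mu_P+n\lambda_P$ instead. The sign correction must therefore come from an orientation issue, and this is the step we expect to be the main obstacle.

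One way to fix the sign is to note that, as unoriented slopes, $\mu-n\lambda$ and $\mu+n\lambda$ differ by reversing only one of $\mu$ and $\lambda$. The reversal symmetries $\rho_P$ and $\rho_A$ negate both classes at once, so they cannot supply this change. We will therefore first recheck the identification of $E(U)$. The solid torus $\nu(U)$ in $S^3$ has complement $E(U)$, and when $E(U)$ is regarded as a solid torus, the roles of $(\mu_U,\lambda_U)$ are swapped by an orientation-reversing correspondence. Filling $A$ by a solid torus whose meridian is $\lambda_U$ means that the actual filling slope on $\partial\nu(A)$ is the image of $\lambda_U$. Under the orientation-reversing gluing this image is $\mu_A-n\lambda_A$, which suggests the sign is already correct. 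We expect the resolution to be that Hedden's convention (Figure~\ref{fig:conventions}) pairs $h_t(\mu_A)=\lambda_J+t\mu_J$ with a \emph{reversed} boundary orientation relative to the Seifert-framing convention for $A$. If so, the correct slope is $\mu_A+n\lambda_A$, meaning that $-n$ twisting corresponds to $+1/n$ surgery on $A$.

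We plan to settle the sign by a sanity check rather than by tracking orientations abstractly. For $n=1$, $D_+(U,-1)=T$ is the right-handed trefoil. Filling $P$ along $\mu_P+\lambda_P$, that is, $+1$ surgery on an unknotted component, applies a full left-handed twist, via the Rolfsen twist, to the two parallel strands of $A$ passing through the disk of $P$. Because the clasp is positive, the resulting knot is positive, hence the right-handed trefoil. The slope $\mu_P-\lambda_P$ would instead give the figure-eight knot. This comparison fixes the sign for all $n$, since the Rolfsen twist along $1/n$ is the $n$-th power of the twist along $1$. The peripheral statement for the $A$-boundary, up to simultaneous sign, then follows: the Rolfsen twist preserves the meridian of $A$, and $\operatorname{lk}(A,P)=0$ means the framing of $A$ does not change.
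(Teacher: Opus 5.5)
Your overall plan is the paper's route: identify $E(D_+(U,-n))$ with a filling of the $A$-boundary of $E(\mathcal{W}_+)$, read off the peripheral pair from a Seifert surface for $P$ inside $V$, then transport by $\sigma$. But your central slope computation is wrong, and the ``sign obstacle'' you then try to explain comes from that error.

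From \eqref{eq:pattern} with $t=-n$ you have $h_{-n}(\lambda_A)=\mu_U$ and $h_{-n}(\mu_A)=\lambda_U-n\mu_U$. Hence
\[
h_{-n}(\mu_A+n\lambda_A)=\lambda_U-n\mu_U+n\mu_U=\lambda_U .
\]
So the curve on $\partial V$ glued to the meridian $\lambda_U$ of the solid torus $E(U)$ is $\mu_A+n\lambda_A$, not $\mu_A-n\lambda_A$. This is an inversion error: in fact $h_{-n}(\mu_A-n\lambda_A)=\lambda_U-2n\mu_U$. The correct computation gives $E(D_+(U,-n))\cong E(\mathcal{W}_+)(A;\mu_A+n\lambda_A)$ directly, and applying $\sigma$ yields \eqref{eq:twist-exterior} with the right sign.

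No reversal of boundary orientation is involved. Your conjecture that Hedden's convention secretly pairs \eqref{eq:pattern} with a reversed orientation is unfounded. Adopting it would also contradict the convention \eqref{eq:pattern} relative to which the lemma is stated. Your own remark that $\mu-n\lambda$ and $\mu+n\lambda$ cannot be exchanged by $\rho_P$ or $\rho_A$ shows that your intermediate identification is incompatible with the lemma, not that a convention needs repair. The paper does the same computation with $f_n=h_0\circ g_n$, where $g_n$ is a meridional disk twist of $V$. This $f_n$ has the same peripheral action as your $h_{-n}$, and the paper checks $f_n(\mu_A+n\lambda_A)=\lambda_U$.

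As written, your proposal asserts an identification that contradicts the statement, then overrides it with a diagrammatic check without locating the error. That is not a proof.

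The Rolfsen-twist check does land on the correct sign. $+1$-surgery on the unknotted component inserts a left-handed full twist into two antiparallel strands, producing positive crossings, and Hedden's $t=-1$ is such a twist. However, the check rests on a second set of conventions: the blow-down sign, and the handedness of Hedden's twist box. You assert these rather than reconcile them with \eqref{eq:pattern}, so the check cannot substitute for the orientation bookkeeping the lemma is about.

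Once you fix the inversion, drop both the orientation speculation and the sanity check. Your remaining steps then complete the proof exactly as in the paper:
\begin{enumerate}
\item the Seifert surface for $P$ in $V$ gives the preferred longitude (valid for any twist parameter, since the surface lies in $V$);
\item $\sigma$ carries $(\mu_P,\lambda_P)$ to $(\mu_A,\lambda_A)$ on the unfilled boundary.
\end{enumerate}
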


\begin{proof}
We first fill the $A$-boundary. Since $\lambda_A$ is a meridian of $V=S^3\setminus\operatorname{int}\nu(A)$, an appropriate power of a meridional disk twist gives an orientation-preserving diffeomorphism $g_n\colon V\to V$ with peripheral action
\[
    g_n(\lambda_A)=\lambda_A,
    \qquad g_n(\mu_A)=\mu_A-n\lambda_A.
\]
Let $h_0\colon V\to\nu(U)$ be the zero-framed identification from \eqref{eq:pattern}, and set $f_n=h_0\circ g_n$. Then
\[
    f_n(\lambda_A)=\mu_U,
    \qquad f_n(\mu_A)=\lambda_U-n\mu_U,
\]
and consequently
\[
    f_n(\mu_A+n\lambda_A)=\lambda_U.
\]
The curve $\lambda_U$ is a meridian of the complementary solid torus $E(U)$. Thus $f_n$ extends over the filling solid torus to an orientation-preserving diffeomorphism
\[
    S^3_{1/n}(A)\longrightarrow \nu(U)\cup E(U)=S^3.
\]
By \eqref{eq:pattern}, the image of $P$ is $D_+(U,-n)$. Restricting to its exterior gives
\[
    E(\mathcal{W}_+)(A;\mu_A+n\lambda_A)
    \cong E(D_+(U,-n));
\]
see Figure \ref{fig:twist-filling}.

The symmetry $\sigma$ exchanges $P$ and $A$ and their peripheral pairs. It therefore identifies the filling on the left with $E(\mathcal{W}_+)(P;\mu_P+n\lambda_P)$, proving \eqref{eq:twist-exterior}. A meridian of the remaining component is preserved. Since $\operatorname{lk}(A,P)=0$, a Seifert surface for $A$ can be chosen disjoint from $P$ and survives the filling on $P$. Its boundary collar determines the original preferred longitude $\lambda_A$, so this longitude remains the preferred longitude of the image knot. Reversing the orientation of that knot changes the signs of both peripheral classes simultaneously.
\end{proof}

\begin{figure}[htbp]
\centering
    \begin{tikzpicture}[font=\small,scale=.78,
    link strand/.style={draw=black,line width=.8pt,line cap=round,line join=round},
    crossing/.style={link strand,preaction={draw=white,line width=3.6pt,line cap=butt}}]
    \begin{scope}
        \draw[link strand] (-1.25,.02) ellipse[x radius=.82,y radius=.4];
        % Outer arc of P
        \draw[crossing,postaction={decorate},decoration={markings,mark=at position .77 with {\arrow{stealth}}}]
        (-.55,1.80) .. controls (-1.04,2.01) and (-1.60,1.10) .. (-1.60,.10)
        .. controls (-1.60,-.98) and (-1.10,-1.45) .. (0,-1.45)
        .. controls (1.10,-1.45) and (1.60,-.98) .. (1.60,.10)
        .. controls (1.60,1.10) and (1.04,2.01) .. (.55,1.80);
        % Inner arc of P
        \draw[crossing,postaction={decorate},decoration={markings,mark=at position .875 with {\arrow{stealth}}}]
        (.55,.30) .. controls (.97,.12) and (.97,-.70) .. (0,-.70)
        .. controls (-.97,-.70) and (-.97,.12) .. (-.55,.30);
        % The two clasp strands
        \draw[link strand] (-.55,1.80) .. controls (-.20,1.65) and (.32,1.30) .. (.32,1.05) .. controls (.32,.80) and (-.20,.45) .. (-.55,.30);
        \draw[link strand] (.55,1.80) .. controls (.20,1.65) and (-.32,1.30) .. (-.32,1.05) .. controls (-.32,.80) and (.20,.45) .. (.55,.30);
        % Hedden's positive clasp
        \draw[crossing] (-0.144388,1.578415) .. controls (-0.044596,1.512347) and (0.050150,1.439950) .. (0.127725,1.366495);
        \draw[crossing] (-0.127725,0.733505) .. controls (-0.050150,0.660050) and (0.044596,0.587653) .. (0.144388,0.521585);
        % The upper arc of A passes over both strands of P and is directed to the right.
        \draw[crossing,postaction={decorate},decoration={markings,mark=at position .5 with {\arrow{stealth}}}]
        (-2.07,.02) arc[start angle=180,end angle=0,x radius=.82,y radius=.4];
        \node at (-2.10,-.51) {$A$};
        \node at (1.95,.10) {$P$};
        \node at (-2.19,.78) {$1/n$};
        \node[font=\scriptsize] at (-.15,-1.88) {fill $A$ along $\mu_A+n\lambda_A$};
        \end{scope}
        \draw[->,line width=.6pt] (2.35,.25)--(3.50,.25);
        \node at (2.925,.76) {$f_n$};
        \begin{scope}[shift={(5.7,0)}]
        % Outer arc of P
        \draw[crossing,postaction={decorate},decoration={markings,mark=at position .77 with {\arrow{stealth}}}]
        (-.55,1.80) .. controls (-1.04,2.01) and (-1.60,1.10) .. (-1.60,.10)
        .. controls (-1.60,-.98) and (-1.10,-1.45) .. (0,-1.45)
        .. controls (1.10,-1.45) and (1.60,-.98) .. (1.60,.10)
        .. controls (1.60,1.10) and (1.04,2.01) .. (.55,1.80);
        % Inner arc of P
        \draw[crossing,postaction={decorate},decoration={markings,mark=at position .875 with {\arrow{stealth}}}]
        (.55,.30) .. controls (.97,.12) and (.97,-.70) .. (0,-.70)
        .. controls (-.97,-.70) and (-.97,.12) .. (-.55,.30);
        % The two clasp strands
        \draw[link strand] (-.55,1.80) .. controls (-.20,1.65) and (.32,1.30) .. (.32,1.05) .. controls (.32,.80) and (-.20,.45) .. (-.55,.30);
        \draw[link strand] (.55,1.80) .. controls (.20,1.65) and (-.32,1.30) .. (-.32,1.05) .. controls (-.32,.80) and (.20,.45) .. (.55,.30);
        % Hedden's positive clasp
        \draw[crossing] (-0.144388,1.578415) .. controls (-0.044596,1.512347) and (0.050150,1.439950) .. (0.127725,1.366495);
        \draw[crossing] (-0.127725,0.733505) .. controls (-0.050150,0.660050) and (0.044596,0.587653) .. (0.144388,0.521585);
        \filldraw[fill=white,line width=.6pt] (-1.85,-.42) rectangle (-.52,.22);
        \node at (-1.185,-.10) {$-n$};
        \node at (0,-1.88) {$D_+(U,-n)$};
        \end{scope}
        \draw[->,line width=.6pt] (7.70,.25)--(8.90,.25);
        \node at (8.30,.76) {$n=1$};
        \begin{scope}[shift={(10.65,1.08)},scale=.90]
        % The three crossings are positive with both braid strands directed downward.
        \pgfmathsetmacro{\braidheight}{.72*3}
        \foreach \k in {1,...,3}{%
          \pgfmathsetmacro{\ytop}{-.72*(\k-1)}
          \pgfmathsetmacro{\ybottom}{-.72*\k}
          \draw[link strand] (.38,\ytop)
            .. controls (.38,{\ytop-.30}) and (-.38,{\ybottom+.30}) .. (-.38,\ybottom);
          \draw[link strand] (-.38,\ytop)
            .. controls (-.38,{\ytop-.30}) and (.38,{\ybottom+.30}) .. (.38,\ybottom);
          \begin{scope}
            \clip (0,{\ytop-.36}) circle[radius=.17];
            \draw[crossing] (.38,\ytop)
              .. controls (.38,{\ytop-.30}) and (-.38,{\ybottom+.30}) .. (-.38,\ybottom);
          \end{scope}
        }
        \draw[link strand,postaction={decorate},decoration={markings,mark=at position .77 with {\arrow{stealth}}}]
        (-.38,{-\braidheight})
        .. controls (-.38,{-\braidheight-.4}) and (-1.2,{-\braidheight-.4}) .. (-1.2,{-\braidheight+.1})
        -- (-1.2,-.1)
        .. controls (-1.2,.4) and (-.38,.4) .. (-.38,0);
        \draw[link strand,postaction={decorate},decoration={markings,mark=at position .77 with {\arrow{stealth}}}]
        (.38,{-\braidheight})
        .. controls (.38,{-\braidheight-.4}) and (1.2,{-\braidheight-.4}) .. (1.2,{-\braidheight+.1})
        -- (1.2,-.1)
        .. controls (1.2,.4) and (.38,.4) .. (.38,0);
        \node at (0,-3.28) {$T$};
    \end{scope}
    \end{tikzpicture}
    \caption{The twist-filling convention. After $1/n$-surgery on $A$, the extension of $f_n=h_0\circ g_n$ carries $P$ to $D_+(U,-n)$ in $S^3$. The box records Hedden's full-twist parameter $t=-n$, rather than a signed crossing count. For $n=1$, the resulting knot is the right-handed trefoil $T$. Exchanging $P$ and $A$ gives the filling used in Lemma \ref{lem:filling}.}
    \label{fig:twist-filling}
\end{figure}

\begin{proof}[Proof of Theorem \ref{thm:surgery}, identity \eqref{eq:twist}]
By Lemmas \ref{lem:splice} and \ref{lem:reversibility}, $M_{\mathcal{H}(K,D_+(U,-n))}$ is the standard splice of $E(K)$ and $E(D_+(U,-n))$. Substitute the exterior description from Lemma \ref{lem:filling}. If the peripheral identification introduces a simultaneous minus sign on the $A$-boundary, compose with $\rho_A$. This symmetry negates both peripheral classes at $A$ and fixes those at $P$, so it extends across the $P$-filling and removes the sign. We may therefore use the gluing
\[
    \mu_K\longleftrightarrow\lambda_A,
    \qquad \lambda_K\longleftrightarrow\mu_A.
\]
It follows that
\begin{align*}
    M_{\mathcal{H}(K,D_+(U,-n))}
    &\cong\bigl(E(K)\cup_A E(\mathcal{W}_+)\bigr)
    (P;\mu_P+n\lambda_P)\\
    &\cong E(\mathcal{D}(K))
    \bigl(\mu_{\mathcal{D}(K)}+n\lambda_{\mathcal{D}(K)}\bigr)\\
    &=S^3_{1/n}(\mathcal{D}(K)).
\end{align*}
The second identification, including the preferred longitude, is Lemma \ref{lem:satellite}.
\end{proof}

For $n=1$, the knot $D_+(U,-1)$ is the right-handed trefoil, so \eqref{eq:twist} recovers Tange's trefoil formula. For $n=-1$, $D_+(U,1)$ is the figure-eight knot, giving the corresponding $-1$-surgery formula; see \cite[Section 1.1]{Tan} and the special cases in \cite[Theorem 1.1]{Tsu}.

%----------------------------------------------------%
\subsection{Transferring the Whitehead pattern}

\begin{proof}[Proof of Theorem \ref{thm:surgery}, identity \eqref{eq:transfer}]
Since $\mathcal{D}(J)$ is reversible, Lemma \ref{lem:splice} gives the standard splice description for $M_{\mathcal{H}(\mathcal{D}(J),K)}$. Expanding the double by Lemma \ref{lem:satellite} yields
\begin{equation}\label{eq:three-pieces}
    M_{\mathcal{H}(\mathcal{D}(J),K)}\cong E(J)\cup_AE(\mathcal{W}_+)\cup_PE(K),
\end{equation}
with meridian--longitude gluings at both boundaries of $E(\mathcal{W}_+)$; see Figure \ref{fig:transfer}.

By Lemma \ref{lem:symmetries}, the symmetry $\sigma$ exchanges these boundaries and preserves their preferred peripheral pairs. It is therefore compatible with the gluings up to isotopy. Applying $\sigma$ to the middle piece and the identity maps to $E(J)$ and $E(K)$, with the boundary-collar adjustments described above, gives an orientation-preserving diffeomorphism
\[
    E(J)\cup_AE(\mathcal{W}_+)\cup_PE(K)
    \cong E(K)\cup_AE(\mathcal{W}_+)\cup_PE(J).
\]
The first two pieces on the right form $E(\mathcal{D}(K))$, with its preferred peripheral pair on the remaining boundary, by Lemma \ref{lem:satellite}. Thus the resulting manifold is the standard splice of $E(J)$ and $E(\mathcal{D}(K))$. Since $\mathcal{D}(K)$ is reversible, Lemma \ref{lem:splice} identifies this splice with $M_{\mathcal{H}(J,\mathcal{D}(K))}$.
\end{proof}

\begin{figure}[htbp]
\centering
    \begin{tikzpicture}[font=\small,>=Latex,
     piece/.style={draw,minimum height=9mm,inner xsep=9pt},
     gluing/.style={font=\scriptsize,align=center}]
    \node[piece] (j) at (0,0) {$E(J)$};
    \node[piece,minimum width=25mm] (w) at (4,0) {$E(\mathcal{W}_+)$};
    \node[piece] (k) at (8,0) {$E(K)$};
    \draw (j.east)--node[gluing,above=3pt] {$\mu_J\leftrightarrow\lambda_A$\\$\lambda_J\leftrightarrow\mu_A$} (w.west);
    \draw (w.east)--node[gluing,above=3pt] {$\mu_K\leftrightarrow\lambda_P$\\$\lambda_K\leftrightarrow\mu_P$} (k.west);
    \node[font=\scriptsize,below=3pt] at (w.south west) {$A$};
    \node[font=\scriptsize,below=3pt] at (w.south east) {$P$};
    \node[piece] (k2) at (0,-1.7) {$E(K)$};
    \node[piece,minimum width=25mm] (w2) at (4,-1.7) {$E(\mathcal{W}_+)$};
    \node[piece] (j2) at (8,-1.7) {$E(J)$};
    \draw (k2.east)--node[gluing,above=3pt] {$\mu_K\leftrightarrow\lambda_A$\\$\lambda_K\leftrightarrow\mu_A$} (w2.west);
    \draw (w2.east)--node[gluing,above=3pt] {$\mu_J\leftrightarrow\lambda_P$\\$\lambda_J\leftrightarrow\mu_P$} (j2.west);
    \node[font=\scriptsize,below=3pt] at (w2.south west) {$A$};
    \node[font=\scriptsize,below=3pt] at (w2.south east) {$P$};
    \draw[->] (4,-0.55)--node[right,font=\scriptsize] {$\sigma$} (4,-1.05);
    \end{tikzpicture}
    \caption{The transfer identity. The symmetry $\sigma$ exchanges the Whitehead boundary tori, moving each companion exterior to the other boundary while preserving the specified meridian--longitude identifications.}
    \label{fig:transfer}
\end{figure}

%----------------------------------------------------%
%----------------------------------------------------%
\section{Floer obstructions and the proof of the main theorem}\label{sec:floer}

%----------------------------------------------------%
\subsection{Correction terms}
We use Heegaard Floer invariants with coefficients in $\mathbb{F}_2=\mathbb{Z}/2\mathbb{Z}$. Hedden's formula \cite[Theorem 1.5]{Hed} gives
\begin{equation}\label{eq:hedden}
    \tau(D_+(K,t))=
    \begin{cases}
        1,&t<2\tau(K),\\
        0,&t\geq 2\tau(K).
    \end{cases}
\end{equation}
Since $\tau(T)=1$ and $\mathcal{D}(K)=D_+(K,0)$, induction gives $\tau(\mathcal{D}^j(T))=1$ for every $j\geq 1$. Together with the slice-genus bound $|\tau(K)|\leq g_4(K)$ and the genus-one upper bound in \eqref{eq:whitehead-basic}, this yields
\begin{equation}\label{eq:iterates}
    \tau(\mathcal{D}^j(T))=g_4(\mathcal{D}^j(T))=1\qquad(j\geq 1).
\end{equation}

For an integral homology sphere $Y$, let $d(Y)$ denote the Ozsv\'ath--Szab\'o correction term associated to its unique $\operatorname{Spin}^c$ structure. This invariant is preserved under smooth integral homology cobordism and satisfies $d(S^3)=0$ \cite[Theorem 1.2]{OS}.

For $k\geq 0$, the knot Floer invariants $V_k(K)$ are nonnegative integers satisfying
\[
    V_{k+1}(K)\leq V_k(K)\leq V_{k+1}(K)+1
\]
by \cite[Equation (2.3)]{HW}. The invariant
\[
    \nu^+(K)=\min\{k\geq 0:V_k(K)=0\}
\]
satisfies $\tau(K)\leq\nu^+(K)\leq g_4(K)$ \cite[Definition 2.1 and Propositions 2.3--2.4]{HW}.

For every positive integer $n$, the Ni--Wu surgery formula \cite[Proposition 1.6 and Remark 2.10]{NW} gives
\begin{equation}\label{eq:niwu}
    d(S^3_{1/n}(K))=-2V_0(K).
\end{equation}
To obtain \eqref{eq:niwu}, set $p=1$, $q=n$, and $i=0$ in the general formula and use $d(L(1,n))=d(S^3)=0$ and $\max\{V_0(K),V_1(K)\}=V_0(K)$.

\begin{lemma}\label{lem:d}
    If $\tau(K)=1$ and $g_4(K)\leq 1$, then $V_0(K)=1$ and
    \[
        d(S^3_{1/n}(K))=-2\qquad(n\geq 1).
    \]
    In particular, this conclusion holds with $K=\mathcal{D}(J)$ whenever $\tau(J)>0$.
\end{lemma}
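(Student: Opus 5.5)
The plan is to pin down $V_0(K)$ by squeezing it between the bounds recorded above, and then read off the correction term from the Ni--Wu formula \eqref{eq:niwu}.

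\emph{Lower bound.} Since $\tau(K)=1$, the inequality $\tau(K)\leq\nu^+(K)$ gives $\nu^+(K)\geq 1$. By the definition of $\nu^+$ as the least $k$ with $V_k(K)=0$, this forces $V_0(K)\neq 0$. Because $V_0(K)$ is a nonnegative integer, $V_0(K)\geq 1$.

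\emph{Upper bound.} We have $\nu^+(K)\leq g_4(K)\leq 1$, and combined with the lower bound this gives $\nu^+(K)=1$, so $V_1(K)=0$. The monotonicity relation $V_0(K)\leq V_1(K)+1$ from \cite{HW} then gives $V_0(K)\leq 1$. Hence $V_0(K)=1$.

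\emph{Correction term.} Substituting $V_0(K)=1$ into \eqref{eq:niwu} yields $d(S^3_{1/n}(K))=-2$ for every $n\geq1$.

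\emph{The special case $K=\mathcal{D}(J)$ with $\tau(J)>0$.} The argument requires $\tau(J)$ to be an integer, which it is. Then $\tau(J)\geq 1$, so $0<2\tau(J)$, and Hedden's formula \eqref{eq:hedden} with $t=0$ gives $\tau(\mathcal{D}(J))=1$. The genus-one bound \eqref{eq:whitehead-basic} gives $g_4(\mathcal{D}(J))\leq 1$. Both hypotheses are satisfied, and the general statement applies.

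None of these steps is a real obstacle; the whole proof is a chain of bookkeeping with the cited inequalities. The only point needing care is the squeeze on $V_0$: the lower bound must go through $\nu^+$, not through $\tau$ alone, and the upper bound uses the one-step inequality $V_0\leq V_1+1$ together with $V_1=0$.
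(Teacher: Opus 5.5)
Your proposal is correct and follows the paper's proof essentially step for step: you squeeze $\nu^+(K)=1$ from $\tau\le\nu^+\le g_4$, deduce $V_1(K)=0<V_0(K)\le V_1(K)+1$, apply \eqref{eq:niwu}, and handle $K=\mathcal{D}(J)$ with Hedden's formula at $t=0$ together with the genus-one bound. The integrality of $\tau(J)$ that you invoke is not actually needed, since $0<2\tau(J)$ follows directly from $\tau(J)>0$.
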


\begin{proof}
The inequalities $1=\tau(K)\leq\nu^+(K)\leq g_4(K)\leq  1$ imply $\nu^+(K)=1$. Thus $V_1(K)=0$ and $V_0(K)>0$. Since $V_0(K)\leq V_1(K)+1=1$ and $V_0(K)$ is an integer, we obtain $V_0(K)=1$. Equation \eqref{eq:niwu} proves the first assertion. If $\tau(J)>0$, then \eqref{eq:hedden} with $t=0$ gives $\tau(\mathcal{D}(J))=1$, while the genus-one Whitehead surface gives $g_4(\mathcal{D}(J))\leq 1$, so the first assertion applies.
\end{proof}

%----------------------------------------------------%
\subsection{Linear independence of surgery classes}
To distinguish the surgery manifolds from one another, we use the following result from instanton Floer theory.

\begin{theorem}[Baldwin--Sivek {\cite[Corollary 1.5]{BS}}]\label{thm:BS}
    Let $K\subset S^3$ be a quasipositive knot that is not smoothly slice. Then the classes
    \[
        \bigl\{[S^3_{1/n}(K)]\bigr\}_{n\geq 1}
    \]
    are linearly independent in $\ThZ$.
\end{theorem}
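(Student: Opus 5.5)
Since Theorem \ref{thm:BS} is quoted from \cite{BS}, the paper only needs to cite it. The following is a plan for how one would prove it, in the style of Furuta and Nozaki--Sato--Taniguchi \cite{NST}.

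The invariant I would use is the filtered instanton invariant $r_0(Y)\in(0,\infty]$ of \cite{NST}, defined for oriented integral homology spheres. Roughly, $r_0(Y)$ is the smallest Chern--Simons level at which the filtered instanton Floer complex of $Y$ fails to bound the trivial connection. Four properties are needed:
\begin{enumerate}
    \item $r_0$ is invariant under smooth integral homology cobordism;
    \item $r_0(S^3)=\infty$;
    \item $r_0(Y\#Y')\ge\min\{r_0(Y),r_0(Y')\}$, with a matching estimate for orientation reversals;
    \item a finite value $r_0(Y)$ lies in the set of Chern--Simons values of $SU(2)$ flat connections on $Y$.
\end{enumerate}
Given these, the standard independence argument runs as follows. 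Suppose there is a sequence $Y_n$ such that:
\begin{itemize}
    \item the values $r_0(Y_n)$ are finite and strictly decreasing in $n$;
    \item $r_0(-Y_n)=\infty$ for all $n$;
    \item $Y_n$ has no flat connection with Chern--Simons value equal to $r_0(Y_m)$ for $m<n$.
\end{itemize}
Take a nontrivial relation $\sum_i c_iY_{n_i}=0$ in $\ThZ$ and let $N$ be the largest index with $c_N\ne0$. After replacing the relation by its negative if necessary, assume $c_N>0$ and move all negative terms to the other side. The connected-sum inequality then forces $r_0$ of the side containing $Y_N$ to equal $r_0(Y_N)$. The other side can only attain values coming from the $Y_m$ with $m<N$ and their reversals, which contradicts property (i).

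The geometric input specific to $Y_n=S^3_{1/n}(K)$ comes in two parts.

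\emph{Infinite values for $-Y_n$.} For every knot, $S^3_{1/n}(K)$ with $n\ge1$ bounds a negative definite manifold obtained from the trace of the surgery, so $-Y_n$ bounds a positive definite one. A Donaldson-type argument then gives $r_0(-Y_n)=\infty$.

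\emph{Finite values for $Y_n$.} This is where quasipositivity and non-sliceness enter. Since $K$ is quasipositive, it bounds a complex curve in $B^4$ and so embeds in a Stein setting. This produces a nonvanishing relative invariant, for example the instanton contact invariant or a nonzero instanton $\tau$-type concordance invariant, provided $K$ is not slice. That nonvanishing shows the trivial connection on $S^3_{1/n}(K)$ is not bounded below a certain level, so $r_0(Y_n)<\infty$.

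To get a strictly decreasing sequence, I would use the cobordism from $S^3_{1/n}(K)$ to $S^3_{1/(n+1)}(K)$ built from a $-1$-framed meridian. Its Chern--Simons bookkeeping should give $r_0(Y_{n+1})<r_0(Y_n)$. Concretely, the flat connections that realise $r_0$ should have Chern--Simons values of order $1/(4n)$, coming from the surgery solid torus.

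The main obstacle is the finiteness step. One needs a nonvanishing result in instanton theory, derived from quasipositivity plus non-sliceness, that survives $1/n$-surgery for every $n$. One also needs the resulting levels to separate the different values of $n$. I would first try to obtain this from the Stein cobordism together with a surgery exact triangle in framed instanton homology, and then to control the Chern--Simons filtration through the explicit reducible and abelian flat connections on the surgered solid torus.
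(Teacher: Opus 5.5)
Your opening sentence matches the paper exactly. The paper gives no argument of its own, only the citation and the remark that Baldwin--Sivek deduce $h(S^3_1(K))<0$ for the instanton Fr\o yshov invariant from quasipositivity and non-sliceness, and then apply \cite[Theorem 1.8]{NST}.

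Read as a proof, though, your reconstruction has a genuine gap, and it sits exactly at Baldwin--Sivek's contribution. You leave the finiteness step open, and you pose it in a harder form than needed: an instanton nonvanishing result that survives $1/n$-surgery for every $n$, plus Chern--Simons bookkeeping to separate levels. The missing idea is that only the single $n=1$ inequality $h(S^3_1(K))<0$ is required. The NST criterion turns this inequality into the conclusion for all $n$, including the separation of levels. Finiteness for all $n$ comes from monotonicity of $r_0$ under negative-definite cobordisms, which is absent from your list (i)--(iv). Since $Y_{n+1}$ is $+1$-surgery on the core of the filling torus in $Y_n$, reversing these $2$-handle cobordisms gives negative-definite cobordisms $Y_n\to Y_1$, hence $r_0(Y_n)\le r_0(Y_1)$. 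In the NST framework, $r_0(Y_1)$ is finite once $h(Y_1)<0$. What your sketch never supplies is the theorem that a quasipositive non-slice knot has $h(S^3_1(K))<0$. Baldwin--Sivek prove this with framed instanton homology, so your instinct about the setting is right, but that inequality should be the target.

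Two steps are also wrong as written. First, the definiteness is reversed. Since $1/n=[1,2,\dots,2]$, the manifold $Y_n$ bounds the trace of surgery on $K$ with framing $+1$ followed by a chain of $n-1$ unknots with framing $+2$. This trace is positive definite, because all its leading minors equal $1$. So it is $-Y_n$ that bounds a negative-definite manifold, and in NST's normalization a negative-definite filling is what forces $r_0=\infty$. Your claim that $Y_n$ bounds a negative-definite manifold fails already for $K=T$: Lemma~\ref{lem:d} gives $d(S^3_{1/n}(T))=-2$, while a homology sphere bounding a negative-definite manifold has $d\ge 0$ \cite{OS}. Your conclusion $r_0(-Y_n)=\infty$ survives only because the opposite sign in your Donaldson-type step cancels this error.

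Second, a definite cobordism between consecutive surgeries yields only the non-strict inequality $r_0(Y_{n+1})\le r_0(Y_n)$. The strict decrease your independence argument needs does not follow from it, and the asserted $1/(4n)$ scale is unsupported.

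Your third bulleted hypothesis, by contrast, is unnecessary. Given strict decrease and $r_0(-Y_m)=\infty$, the connected-sum inequality shows that $r_0$ of the side containing $Y_N$ equals $r_0(Y_N)$. It also shows that $r_0$ of the other side is at least $\min_{m<N}r_0(Y_m)>r_0(Y_N)$, which contradicts (i).
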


Baldwin and Sivek show that the hypotheses above imply $h(S^3_1(K))<0$, where $h$ is the instanton Fr\o yshov invariant, and then apply the linear independence criterion of Nozaki, Sato and Taniguchi \cite[Theorem 1.8]{NST}.

\begin{lemma}\label{lem:Z}
    The knot $Z=\mathcal{D}^3(T)$ is strongly quasipositive and satisfies $\tau(Z)=g_4(Z)=1$.
\end{lemma}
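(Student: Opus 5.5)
The plan has two parts: the $\tau$ and genus computation, and strong quasipositivity.

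For $\tau$ and genus, the equality $\tau(Z)=g_4(Z)=1$ is the case $j=3$ of \eqref{eq:iterates}. That case follows from Hedden's formula \eqref{eq:hedden} by induction starting from $\tau(T)=1$, together with $g_4\leq 1$ from \eqref{eq:whitehead-basic}. So only strong quasipositivity needs an argument.

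For strong quasipositivity, I would proceed by induction on the iterate, using the standard fact (Rudolph) that the positive-clasped $t$-twisted Whitehead double $D_+(J,t)$ is strongly quasipositive whenever $t\leq 2\operatorname{TB}(J)-1$, where $\operatorname{TB}$ denotes the maximal Thurston--Bennequin number. Equivalently, the genus-one Seifert surface is a quasipositive surface: it is a plumbing-like annulus built from the $t$-framed annulus around $J$ and a positive Hopf band. That annulus is quasipositive once $t$ is at most $\operatorname{TB}(J)-1$ in the appropriate convention, and a positive Hopf-band modification preserves quasipositivity. Since we only need $t=0$, it suffices to know $\operatorname{TB}(J)\geq 1$ for $J=T,\mathcal{D}(T),\mathcal{D}^2(T)$. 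Concretely, I would cite Rudolph's criterion that $D_+(J,t)$ is strongly quasipositive if $t<\operatorname{TB}(J)$, so $t=0$ requires $\operatorname{TB}(J)\geq1$.

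The base case is that the right-handed trefoil has $\operatorname{TB}(T)=1$. For the inductive step, we need $\operatorname{TB}(\mathcal{D}(J))\geq 1$ whenever $\operatorname{TB}(J)\geq 1$. I would build a Legendrian realization directly. Take a Legendrian $J$ with $\operatorname{tb}=1$, take its Legendrian push-off (a two-strand parallel with framing $\operatorname{tb}(J)=1$), and insert a clasp. Adding full negative twists to reduce the framing from $1$ to $0$ is done Legendrian-ly by stabilization-type zigzags on one strand. Counting writhe and cusps, the positive clasp contributes $+2$ writhe and $-1$ cusp pair, so the resulting Legendrian double has $\operatorname{tb}\geq 1$. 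This is the standard computation giving $\operatorname{TB}(D_+(J,t))\geq 1$ for $t\leq \operatorname{TB}(J)$.

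An alternative route avoids TB entirely. Strongly quasipositive knots are closed under satellites with a strongly quasipositive pattern of nonnegative winding-compatible framing when the companion is strongly quasipositive and the twisting is at most the companion's maximal self-linking bound. This reduces to the same framing inequality, so I would go with the TB approach.

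The main obstacle is the inductive step: getting the framing inequality and sign conventions to match Hedden's positive clasp and $t=0$. The cleanest fix is to quote Rudolph's result that $D_+(J,t)$ is strongly quasipositive iff $t\leq\operatorname{TB}(J)-1$ in his convention, together with the Legendrian satellite bound $\operatorname{TB}(D_+(J,t))\geq1$ for $t\leq\operatorname{TB}(J)-1$, and check that $\operatorname{TB}(T)=1$ lets $t=0$ pass at every stage.
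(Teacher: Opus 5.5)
Your treatment of $\tau(Z)=g_4(Z)=1$ is the same as the paper's: it is the case $j=3$ of \eqref{eq:iterates}. For strong quasipositivity, however, you take a genuinely different route.

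The paper propagates strong quasipositivity directly. $T$ is the closure of $\sigma_1^3$, and Rudolph's lemma says that the positive untwisted double of a \emph{nontrivial} strongly quasipositive knot is again strongly quasipositive. The paper applies this three times, using $g_4=1$ to know that $\mathcal{D}(T)$ and $\mathcal{D}^2(T)$ are nontrivial.

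You instead propagate the inequality $\operatorname{TB}\geq 1$ through a Legendrian satellite construction, and apply Rudolph's annulus criterion at each stage. Your route needs contact-geometric input that the paper avoids. In exchange, it requires no nontriviality check and yields the extra fact $\operatorname{TB}(\mathcal{D}^j(T))=1$; the upper bound comes from the slice--Bennequin inequality and $g_4=1$. This matches the Legendrian representative of $K_\ast$ used later in the proof of Theorem~\ref{thm:refinement}.

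The sign issue you flag resolves. A front clasp made of two overlapping cusps has two positive crossings, so it produces Hedden's $D_+$. For example, it turns the $\operatorname{tb}=-1$ unknot into $D_+(U,-1)=T$ with $\operatorname{tb}=1$.

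Two details should be corrected, though neither affects the case you use.

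First, your three formulations of Rudolph's criterion disagree. The correct sufficient condition is $t\leq\operatorname{TB}(J)$: then $A(J,t)$ is the ribbon of a Legendrian representative with $\operatorname{tb}=t$, and plumbing on a positive Hopf band preserves quasipositivity. The other two versions are wrong in general.
\begin{itemize}
\item The version $t\leq 2\operatorname{TB}(J)-1$ fails for $J=T_{2,5}$ and $t=5$. There \eqref{eq:hedden} gives $\tau(D_+(J,5))=0$, which is impossible for a strongly quasipositive genus-one knot.
\item The ``iff $t\leq\operatorname{TB}(J)-1$'' version fails for $D_+(U,-1)=T$.
\end{itemize}
At $t=0$ and $\operatorname{TB}(J)=1$, all three versions agree.

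Second, the framing is not lowered by a zigzag on one strand. That is a smoothly trivial stabilization of the double, and it only lowers $\operatorname{tb}$. You must stabilize the companion instead. Because the Whitehead pattern has winding number zero, this leaves the satellite's $\operatorname{tb}$ equal to the clasp's contribution $2-1=1$, which is the count you wrote.
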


\begin{proof}
The right-handed trefoil is strongly quasipositive, since it is the closure of the positive braid $\sigma_1^3$. By Rudolph's Whitehead-double construction, the positive untwisted double of a nontrivial strongly quasipositive knot is again strongly quasipositive \cite[Lemma 2]{Rud}. Each intermediate double is nontrivial by \eqref{eq:iterates}, so this construction can be applied three times to show that $Z$ is strongly quasipositive. Equation \eqref{eq:iterates} also gives $\tau(Z)=g_4(Z)=1$. In particular, $Z$ is quasipositive and not smoothly slice, so it satisfies the hypotheses of Theorem \ref{thm:BS}.
\end{proof}

%----------------------------------------------------%
\subsection{Proof of Theorem \ref{thm:main}}
We use the elementary fact that local knotting respects concordance in both categories.

\begin{lemma}\label{lem:local}
    If $J_i$ is concordant to $K_i$ for $i=1,2$, then $\mathcal{H}(J_1,J_2)$ is concordant to $\mathcal{H}(K_1,K_2)$. The statement holds for smooth concordance and for locally flat topological concordance.
\end{lemma}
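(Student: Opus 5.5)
The plan is to build the concordance locally: take the concordances of the two local knots, each lying in a ball-times-interval, and glue them into the product concordance of the Hopf link with the knots removed.

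\emph{Setup.} Choose disjoint closed balls $B_1,B_2\subset S^3$ such that $\mathcal{H}(J_1,J_2)$ meets each $B_i$ in a single unknotted-in-the-boundary arc carrying the local knot $J_i$. The same balls serve for $\mathcal{H}(K_1,K_2)$, which meets $B_i$ in an arc carrying $K_i$. Outside $B_1\cup B_2$ the two links coincide with the Hopf link $H$ minus two trivial arcs.

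\emph{Converting knot concordances to arc concordances.} Let $C_i\subset S^3\times[0,1]$ be a concordance (smooth, or locally flat) from $J_i$ to $K_i$. Pick a properly embedded arc $\gamma_i\subset C_i$ running from a point of $J_i\times\{0\}$ to a point of $K_i\times\{1\}$. After an isotopy we may assume $\gamma_i=\{p_i\}\times[0,1]$ and that $C_i$ is a product near $\gamma_i$.
\begin{itemize}
\item In the smooth case this follows from the tubular neighbourhood theorem.
\item In the locally flat case it uses the existence of normal bundles for locally flat $2$-dimensional submanifolds of $4$-manifolds, as in \cite[Section 9.3]{FQ}. Alternatively, one can use only that $C_i$ is locally flat near a point and make the arc a product by isotopy.
\end{itemize}
Remove from $S^3\times[0,1]$ a tubular neighbourhood $N(\gamma_i)\cong D^3\times[0,1]$ that meets $C_i$ in a trivial strip. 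The result $(S^3\setminus\operatorname{int}D^3)\times[0,1]\cong B^3\times[0,1]$ contains a concordance of $1$-string tangles from the arc version of $J_i$ to the arc version of $K_i$. Its boundary on the side face $\partial B^3\times[0,1]$ is two straight segments.

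\emph{Gluing.} Insert these two tangle concordances into $B_1\times[0,1]$ and $B_2\times[0,1]$. Take the product $(H\setminus(B_1\cup B_2))\times[0,1]$ in the complement. The pieces agree on $\partial B_i\times[0,1]$, since both are products of the same two points there. The union is a properly embedded annulus pair: each component is a union of annuli and strips whose Euler characteristics add to zero, and each is connected with two boundary circles. Its ends are $\mathcal{H}(J_1,J_2)$ and $\mathcal{H}(K_1,K_2)$, with orientations inherited from the $C_i$.

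\emph{Regularity and the main obstacle.} Near the gluing faces every piece is a product, so the result is smooth, respectively locally flat. The only delicate point is making $C_i$ a product along an arc in the topological category. There I would invoke the local-flatness and normal-bundle results of \cite{FQ}; everything else is routine.
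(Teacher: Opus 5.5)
Your proposal is correct and uses the same argument as the paper. The paper forms the componentwise connected sum of the product concordance of $H$ with concordances $J_i\to K_i$ in disjoint local-knotting regions, and your removal of a product neighbourhood of an arc $\gamma_i\subset C_i$ followed by regluing into $B_i\times[0,1]$ is the standard construction of that connected sum, written out in more detail. In the locally flat case, rely on the normal-bundle argument you cite rather than the ``locally flat near a point'' alternative, which by itself does not give a product structure along the whole arc.
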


\begin{proof}
Take the product concordance of the Hopf link and form its componentwise connected sum with concordances from $J_i$ to $K_i$, in disjoint local-knotting regions. The resulting surfaces are disjoint annuli from $\mathcal{H}(J_1,J_2)$ to $\mathcal{H}(K_1,K_2)$, respecting the component orderings and orientations. The construction preserves smoothness, respectively local flatness.
\end{proof}

\begin{proof}[Proof of Theorem \ref{thm:main}]
The local knots $\mathcal{D}(K_n)$ and $K_\ast=\mathcal{D}(T)$ have Alexander polynomial one by \eqref{eq:whitehead-basic}. Freedman's Alexander-polynomial-one theorem implies that both are locally flat topologically slice, and hence topologically concordant to the unknot $U$ \cite[Theorem 11.7B]{FQ}. Lemma \ref{lem:local} therefore gives a topological concordance from $L_n$ to $\mathcal{H}(U,U)=H$.

Apply the transfer identity \eqref{eq:transfer}, interchange the Hopf components, and then use the twist-knot surgery formula \eqref{eq:twist}:
\begin{align*}
    M_{L_n}
    &=M_{\mathcal{H}(\mathcal{D}(K_n),K_\ast)}\\
    &\cong M_{\mathcal{H}(K_n,\mathcal{D}^2(T))}\\
    &\cong M_{\mathcal{H}(\mathcal{D}^2(T),K_n)}\\
    &\cong S^3_{1/n}\bigl(\mathcal{D}^3(T)\bigr)
    =S^3_{1/n}(Z).
\end{align*}
All these diffeomorphisms preserve orientation, proving \eqref{eq:main-surgery}. By Lemma \ref{lem:Z}, the fixed knot $Z$ is quasipositive and not smoothly slice. Theorem \ref{thm:BS} therefore gives the linear independence of the classes $[M_{L_n}]$. Lemmas \ref{lem:Z} and \ref{lem:d} also give $d(M_{L_n})=-2$ for every $n\geq 1$.

The links $L_n$ and $H$ have linking number one, so their zero-framed linking matrices are unimodular. A finite chain of shake concordances from $L_n$ to $L_m$ would give $[M_{L_n}]=[M_{L_m}]$ by Corollary \ref{cor:chain}, contradicting linear independence when $n\neq m$. A finite chain from $L_n$ to $H$ would give $[M_{L_n}]=[M_H]=[S^3]=0$, again contradicting linear independence. Reordering or reorienting components leaves the oriented zero-surgery manifold unchanged, so the same obstructions apply when component orderings and orientations are disregarded.
\end{proof}

\begin{remark}\label{rem:elementary}
    The non-equivalence of $L_1=\mathcal{H}(K_\ast,K_\ast)$ and $H$ does not require Theorem \ref{thm:BS}. The surgery identities give
    \[
        M_{\mathcal{H}(K_\ast,K_\ast)}\cong S^3_1(Z),
    \]
    and Lemma \ref{lem:d} gives correction term $-2$, whereas $d(M_H)=0$. More generally, $d(M_{L_n})=-2$ separates every $L_n$ from $H$ by Corollary \ref{cor:chain} and homology-cobordism invariance of $d$. Since these correction terms are equal for all $n$, they do not distinguish the $L_n$ from one another. The instanton theorem supplies the pairwise distinction and the stronger linear-independence conclusion.
\end{remark}

\begin{remark}[Choice of companion]\label{rem:companion}
    The trefoil used to define the fixed companion $K_\ast=\mathcal{D}(T)$ can be replaced by any nontrivial strongly quasipositive knot $J$. More precisely, set
    \[
        L_n(J)=\mathcal{H}\bigl(\mathcal{D}(K_n),\mathcal{D}(J)\bigr),
    \qquad Z_J=\mathcal{D}^3(J).
    \]
    Then each $L_n(J)$ is topologically concordant to $H$, and there is an orientation-preserving diffeomorphism
    \[
        M_{L_n(J)}\cong S^3_{1/n}(Z_J).
    \]
    For each fixed $J$, the classes $\{[M_{L_n(J)}]\}_{n\geq 1}$ are linearly independent in $\ThZ$, and their correction terms are all $-2$. Indeed, strong quasipositivity and nontriviality give $\tau(J)=g(J)>0$ \cite[Proposition 3]{HW}, where $g$ denotes Seifert genus. Hedden's formula \eqref{eq:hedden}, the slice-genus bound $|\tau|\leq g_4$, and the genus-one upper bound in \eqref{eq:whitehead-basic} give $\tau(\mathcal{D}^j(J))=g_4(\mathcal{D}^j(J))=1$ for every $j\geq 1$. These iterates are nontrivial, so Rudolph's construction \cite[Lemma 2]{Rud} also shows inductively that they are strongly quasipositive. The proof of Theorem \ref{thm:main} therefore applies with $J$ in place of $T$.
\end{remark}

%----------------------------------------------------%
%----------------------------------------------------%
\section{Further examples, refinements, and questions}\label{sec:examples}

%----------------------------------------------------%
\subsection{Further examples}
The correction-term calculation also gives examples outside the topological concordance class of the Hopf link. Each example below has a trefoil component, whose nonzero classical signature obstructs topological sliceness.

\begin{proposition}\label{prop:examples}
    \begin{enumerate}
        \item If $\tau(K)>0$, then $\mathcal{H}(K,T)\not\shsim H$. In particular, $\mathcal{H}(T,T)\not\shsim H$.
        \item The links $\mathcal{H}(K_\ast,T)$ and $\mathcal{H}(U,T)$ are topologically concordant but not shake equivalent. Moreover, $\mathcal{H}(U,T)$ admits a single shake concordance to $H$.
    \end{enumerate}
\end{proposition}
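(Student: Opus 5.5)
For part (i), I would compute the zero-surgery manifold and then apply the correction-term obstruction. Lemma \ref{lem:reversibility} says $T=D_+(U,-1)$ is a twist knot, so identity \eqref{eq:twist} with $n=1$ gives
\[
M_{\mathcal{H}(K,T)}\cong S^3_{1}(\mathcal{D}(K)).
\]
Since $\tau(K)>0$, Lemma \ref{lem:d} applies with $J=K$ and gives $d(M_{\mathcal{H}(K,T)})=-2$. On the other hand $M_H\cong S^3$ by Lemma \ref{lem:splice}, so $d(M_H)=0$. Both links have linking number one, so their linking matrices are unimodular. Corollary \ref{cor:chain}, together with homology-cobordism invariance of $d$, then rules out a chain of shake concordances. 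The case $K=T$ follows because $\tau(T)=1$.

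For the first half of (ii), I would get the topological concordance from Lemma \ref{lem:local}. The knot $K_\ast=\mathcal{D}(T)$ has Alexander polynomial one, so by Freedman's theorem it is topologically concordant to $U$. Applying Lemma \ref{lem:local} with the second component held fixed at $T$ shows that $\mathcal{H}(K_\ast,T)$ is topologically concordant to $\mathcal{H}(U,T)$.

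For non-equivalence, I would compute both zero-surgeries. Lemma \ref{lem:splice}, with the components interchanged, gives $M_{\mathcal{H}(U,T)}\cong M_{\mathcal{H}(T,U)}\cong S^3$. Part (i) with $K=K_\ast$ needs $\tau(K_\ast)=1>0$, which is \eqref{eq:iterates}; it gives $d(M_{\mathcal{H}(K_\ast,T)})=-2$. Corollary \ref{cor:chain} then separates the two links, since both have unimodular linking matrices.

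The remaining claim is that $\mathcal{H}(U,T)$ admits a single shake concordance to $H$. Here I would invoke Bosman's result \cite[Proposition 3.1]{Bos19}, quoted in the introduction: every knot together with a positive meridian is $0$-shake concordant to the Hopf link. The link $\mathcal{H}(U,T)$ is exactly $T$ together with an unknotted meridian, because its first component is an unknot bounding a disk that meets the second component once. The main thing to check is the sign and ordering conventions. I need to verify that this meridian is positive in Bosman's sense, which should follow from linking number $+1$ in the positive Hopf clasp. I also need the ordering to match, with the knotted component second. If Bosman's statement fixes the knot as the first component, I would reorder both links. This is legitimate because Definition \ref{def:shake} is symmetric under a simultaneous relabeling of the components of $L$ and $L'$, and relabeling $H$ gives $H$ again.

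If these conventions turned out not to match, I would instead construct the shake concordance directly. The idea is to take a band sum of $T$ with parallel copies, guided by the meridian disk, which unknots $T$ at the cost of the extra copies permitted in Definition \ref{def:shake}. I expect this convention check to be the only real obstacle.
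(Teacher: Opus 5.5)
Your proposal is correct and follows the paper's proof step for step. You use the $n=1$ case of \eqref{eq:twist} with Lemma \ref{lem:d} and Corollary \ref{cor:chain} for both correction-term obstructions, Freedman's theorem with Lemma \ref{lem:local} for the topological concordance, Lemma \ref{lem:splice} for $M_{\mathcal{H}(U,T)}\cong S^3$, and Bosman's positive-meridian result \cite[Proposition 3.1]{Bos19} for the single shake concordance; the paper additionally remarks there that one crossing change unknots $T$.

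Two small citation slips. The identification $T=D_+(U,-1)$ comes from the introduction ($K_1=T$) and Figure \ref{fig:twist-filling}, not from Lemma \ref{lem:reversibility}. Also, Lemma \ref{lem:splice} with $K=T$ gives $M_{\mathcal{H}(U,T)}\cong S^3$ directly, so you do not need to interchange components.
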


\begin{proof}
All links in the statement have linking number one and hence unimodular zero-framed linking matrices. For the first assertion, Theorem \ref{thm:surgery} with $n=1$ and Lemma \ref{lem:d} give
\[
d(M_{\mathcal{H}(K,T)})
=d(S^3_1(\mathcal{D}(K)))=-2.
\]
Since $M_H\cong S^3$ and $d(S^3)=0$, Corollary \ref{cor:chain} and homology-cobordism invariance of $d$ rule out shake equivalence. Taking $K=T$, for which $\tau(T)=1$, gives the stated special case.

For the second assertion, $K_\ast$ is topologically slice, so Lemma \ref{lem:local} gives a topological concordance between $\mathcal{H}(K_\ast,T)$ and $\mathcal{H}(U,T)$. Since $\tau(K_\ast)=1$ by \eqref{eq:iterates}, the calculation above gives $d(M_{\mathcal{H}(K_\ast,T)})=-2$. On the other hand, Lemma \ref{lem:splice} gives $M_{\mathcal{H}(U,T)}\cong S^3$, so $d(M_{\mathcal{H}(U,T)})=0$. Corollary \ref{cor:chain} therefore shows that these two links are not shake equivalent.

Finally, the first component of $\mathcal{H}(U,T)$ is a positive meridian of the trefoil component. Since one crossing change unknots $T$, Bosman's construction gives a single shake concordance from $\mathcal{H}(U,T)$ to $H$ \cite[Proposition 3.1]{Bos19}.
\end{proof}

Proposition \ref{prop:examples}(i) combines Tange's surgery and correction-term calculation with the zero-surgery obstruction \cite{Tan,Bos21}. Theorem \ref{thm:main} adds a family within the topological Hopf-link class whose surgery classes are linearly independent.

%----------------------------------------------------%
\subsection{Smooth concordance classes within a shake class}
We first isolate the locality needed to pass from the knot satellite construction to strong shake concordance of links. For an ordered link $L=C_1\cup\cdots\cup C_m$ and a pattern $P\subset S^1\times D^2$, let $\mathcal S_P^{(k)}(L)$ denote replacement of the entire component $C_k$ by $P(C_k)$ in its Seifert zero-framed tubular neighbourhood, leaving all other components fixed.

% Relative satellite modification
\begin{lemma}\label{lem:relative-satellite}
    Let $P$ be a winding-number-one pattern such that $P(U)$ is smoothly slice. Then $L$ and $\mathcal S_P^{(k)}(L)$ admit a single strong shake concordance. Its surfaces on the unchanged components may be chosen to be product annuli, and its remaining surface lies in $\nu(C_k)\times[0,1]$.
\end{lemma}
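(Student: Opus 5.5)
We will adapt the Cochran--Ray satellite argument \cite[Proposition 3.1]{CR} so that it takes place inside $\nu(C_k)\times[0,1]$. The goal is a single genus-zero surface $F_k\subset\nu(C_k)\times[0,1]$ whose bottom boundary is one zero-framed copy of $C_k$ (so $a_{kk}=0$) and whose top boundary is $2b+1$ zero-framed parallel copies of $P(C_k)$, with $b+1$ of them positively oriented. For $j\neq k$ we take $F_j=C_j\times[0,1]$, the product annuli. These are disjoint from $F_k$ because $\nu(C_k)$ meets no other component, and the relevant parallel copies stay inside the fixed framed neighbourhoods. No copies of other components appear in $\partial F_k$, so the shake concordance is strong. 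If the construction instead produces the parallel copies at the bottom, we reverse the interval coordinate.

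The construction happens in the model solid torus. Write $V=S^1\times D^2$, with $P\subset V$ and core $c=S^1\times\{0\}$. The zero-framed identification $V\cong\nu(C_k)$ carries $c$ to $C_k$, $P$ to $P(C_k)$, and $\mathrm{Seifert}$-framed parallels of $c$ to zero-framed parallels of $C_k$ in $S^3$. We therefore need a planar surface in $V\times[0,1]$ from $c$ to an odd number of framed parallel copies of $P$ with the correct signs. The first step uses winding number one: $P$ is homologous to $c$ in $V$. Then $P$ and $c$ cobound an oriented surface in $V\times[0,1]$, but in general it has positive genus, and the pattern is what forces the sliceness hypothesis.

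The second step is the Cochran--Ray trick. Since $P(U)$ is slice, filling $V$ to $S^3$ makes $P$ bound a disk $\Delta$ in $B^4$. Embed $V\times[0,1]$ so that $V\times\{1\}$ is a neighbourhood of an unknot and $V$ is complementary to a meridional solid torus. Puncturing $\Delta$ where it meets the core $2$-handle, i.e.\ intersecting $\Delta$ with the complement of $D^2\times(\text{cocore})$, gives a planar surface in $V\times[0,1]$. Its boundary consists of $P$ at one end together with signed parallel copies of $c$, one for each intersection point of $\Delta$ with the cocore disk. Winding number one means the algebraic count is $1$, so there are $2a+1$ copies, $a+1$ of them positive. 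Turning this upside down gives $2a+1$ copies of $c$ at the bottom and one copy of $P$ at the top. That is the wrong configuration: Definition~\ref{def:shake} allows parallel copies of either end, but we want a single $C_k$ at the bottom and copies of $P(C_k)$ at the top. Both configurations are allowed by the definition, since $Q_k$ and $Q'_k$ may each contain any odd number of copies. So we take the surface with one copy of $P(C_k)$ and $2a+1$ copies of $C_k$ as the shake surface, with $a_{kk}=a$ and $b_{kk}=0$. This already fits Definition~\ref{def:shake}, and it is the orientation I will commit to. The parallel copies of $c$ are distinct zero-framed parallels because they are boundaries of distinct normal disks of the core.

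The main obstacle is verifying this last framing claim: that the copies of $c$ produced by puncturing are \emph{zero-framed} parallels in $S^3$ relative to the identification $V\cong\nu(C_k)$. This requires that the embedding of $V\times[0,1]$ into the $2$-handle picture use the untwisted $0$-framing. We arrange that by identifying $S^1\times D^2\times[0,1]$ with the exterior of a trivial disk in $B^4$, i.e.\ $S^3\times[0,1]$ minus a product neighbourhood of the meridional circle, so that longitudes of $c$ correspond to meridians of the removed circle. After this, the argument is a routine check of smoothness, genus zero (the surface is a punctured disk) and orientations.
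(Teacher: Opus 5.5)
Your proposal is correct and is essentially the paper's argument: puncture a slice disk for $P(U)$ where it meets a surface bounded by the axis $\eta$, which gives a connected planar surface in $V\times[0,1]$ from $P$ to $2a+1$ zero-framed core parallels with algebraic count one; then embed it in $\nu(C_k)\times[0,1]$ via the Seifert framing and take product annuli on the other components. The differences are cosmetic. You remove a neighbourhood of the pushed-in disk bounded by $\eta$ in $B^4$, so the punctures land directly on the far end, whereas the paper uses the annulus $\eta\times[0,1]$ in $S^3\times[0,1]$ and must tube the punctures along it to the far end. Your closing ``i.e.'' conflates these two models, and your opening paragraph states the configuration backwards, though the one you finally commit to (one copy of $P(C_k)$ and $2a+1$ copies of $C_k$) is the right one.
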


\begin{proof}
The proof of \cite[Proposition 3.1]{CR} constructs a smooth connected genus-zero surface in $(S^1\times D^2)\times[0,1]$ from $P$ to a zero-framed shaking of the core. To recall the locality of that construction, write $S^1\times D^2=S^3\setminus\operatorname{int}\nu(\eta)$, where $\eta$ is the solid-torus axis. Choose a slice disk for $P(U)$ in $S^3\times[0,1]$, transverse to the product annulus $\eta\times[0,1]$. Puncture the disk at these intersections and extend the resulting boundary circles along disjoint arcs on the annulus to the opposite end, using tubes in the boundary of its normal neighbourhood. The resulting surface lies in the solid-torus product and remains connected and of genus zero. Its other boundary consists of zero-framed core parallels with algebraic count one, since $P$ has winding number one.

Embed this solid-torus product in $\nu(C_k)\times[0,1]$ using the Seifert zero framing. The surface then has one copy of $P(C_k)$ at one end and an odd number of zero-framed parallels of $C_k$, with algebraic count one, at the other. Since $\nu(C_k)$ is disjoint from the other components, this surface is disjoint from the product annuli $C_l\times[0,1]$, $l\neq k$. Together these surfaces satisfy Definition \ref{def:shake}, with all off-diagonal shaking multiplicities zero.
\end{proof}

\begin{proof}[Proof of Theorem \ref{thm:refinement}]
Fix $n\geq 1$. Let $Q$ be the winding-number-one pattern of Davis and Ray \cite[Figure 4]{DR}, and let $Q^i$ denote its $i$-fold composition, with $Q^0$ the core pattern. We use two properties of this pattern: $Q(U)=U$, and the link consisting of $Q^i$ and its solid-torus axis is topologically concordant to $H$ for every $i\geq 1$ \cite[Proposition 2.3 and the proof of Proposition 2.4]{DR}. The axis link for $Q^0$ is $H$ itself. In particular, $Q^i(U)=U$ for all $i\geq 0$.

Set
\[
    L_{n,i}=\mathcal S^{(2)}_{Q^i}(L_n),\qquad i\geq 0.
\]
Thus $L_{n,0}=L_n$, and the component knot types of $L_{n,i}$ are $\mathcal{D}(K_n)$ and $Q^i(K_\ast)$. This satellite operation is applied to the entire second component, including its passage through the Hopf clasp.

The zero-framed satellite construction is associative, with pattern composition taken using the preferred longitudes. Indeed, a zero-framed push-off of a pattern remains zero-framed under the zero-framed companion embedding. Hence, for $0\leq i<j$,
\begin{equation}\label{eq:relative-composition}
    L_{n,j}=\mathcal S_{Q^{j-i}}^{(2)}(L_{n,i})
\end{equation}
up to ordered, oriented link isotopy. This identity holds for the full satellite links, with the first component fixed. Since $Q^{j-i}$ has winding number one and $Q^{j-i}(U)=U$, Lemma \ref{lem:relative-satellite} gives a single strong shake concordance between $L_{n,i}$ and $L_{n,j}$. Thus every pair in the entire satellite family admits such a concordance directly.

To check the topological concordance type of the whole link, start with the topological concordance from $L_n$ to $H$ supplied by Theorem \ref{thm:main}. Apply the same satellite pattern $Q^i$ in a tubular neighbourhood of its second annulus. The Seifert zero framings at the two ends extend over this annulus: its oriented normal disk bundle is trivial, and a normal push-off extending the zero framing at one end has linking number zero at the other, since linking number is preserved by concordance. Satellite insertion therefore gives a locally flat concordance from $L_{n,i}$ to $\mathcal S^{(2)}_{Q^i}(H)$. The latter is the pattern--axis link $(Q^i,\eta(Q^i))$ with its component order reversed, where $\eta(Q^i)$ denotes the axis of the solid torus containing $Q^i$. By \cite[Proposition 2.3]{DR}, this link is topologically concordant to $H$ for $i\geq 1$; reversing the component order at both ends is compatible with the component-exchange symmetry of the oriented positive Hopf link. For $i=0$, the link is $H$ itself. Composing these concordances proves the assertion for every $L_{n,i}$.

It remains to select an infinite subfamily, including $L_{n,0}$, whose members are pairwise distinct in smooth concordance. We use the Legendrian satellite conventions of \cite[Section 2]{DR}. The knot $K_\ast=\mathcal{D}(T)$ has a Legendrian representative with $\operatorname{tb}=1$ and $\operatorname{rot}=0$ \cite[proof of Lemma 4.5]{CR}. Stabilizing with the sign that increases the rotation number gives a representative $\mathcal L_\ast$ with
\[
    \operatorname{tb}(\mathcal L_\ast)=0,\qquad \operatorname{rot}(\mathcal L_\ast)=1.
\]
The pattern $Q$ has a Legendrian representative with $\operatorname{tb}=2$ and $\operatorname{rot}=0$ \cite[Figure 7]{DR}. Following \cite[proof of Proposition 2.6]{DR}, stabilize this representative twice, each time increasing the rotation number, to obtain a Legendrian pattern $\mathcal Q_{\mathrm{st}}$ with $\operatorname{tb}=0$ and $\operatorname{rot}=2$. These stabilizations preserve the underlying smooth pattern. Since the winding number is one, the Legendrian satellite formulas give iterated patterns $\mathcal Q_{\mathrm{st}}^i$ with Thurston--Bennequin number zero and rotation number $2i$. The vanishing Thurston--Bennequin numbers ensure that the contact framings used in composition agree with the zero framings, so these are representatives of the untwisted iterates $Q^i$. The Legendrian satellite $\mathcal Q_{\mathrm{st}}^i(\mathcal L_\ast)$ therefore represents $Q^i(K_\ast)$ and satisfies
\[
    \operatorname{tb}\bigl(\mathcal Q_{\mathrm{st}}^i(\mathcal L_\ast)\bigr)=0,\qquad
    \operatorname{rot}\bigl(\mathcal Q_{\mathrm{st}}^i(\mathcal L_\ast)\bigr)=2i+1.
\]
The knot Floer slice--Bennequin inequality $\operatorname{tb}(\mathcal K)+|\operatorname{rot}(\mathcal K)|\leq 2\tau(K)-1$, recalled in \cite[Equation (4.1)]{CR}, now gives
\begin{equation}\label{eq:satellite-tau-growth}
    \tau\bigl(Q^i(K_\ast)\bigr)\geq i+1\qquad(i\geq 0).
\end{equation}
Together with $|\tau|\leq g_4$, this implies
\begin{equation}\label{eq:satellite-genus-growth}
    g_4\bigl(Q^i(K_\ast)\bigr)\geq i+1\qquad(i\geq 0).
\end{equation}

The lower bound \eqref{eq:satellite-tau-growth} shows that the $\tau$-values are unbounded. To obtain strictly increasing values, we select a subsequence. Since $\tau(K_\ast)=1$, set $i_0=0$ and recursively choose an integer
\[
    i_{j+1}>\max\bigl\{i_j,\tau(Q^{i_j}(K_\ast))\bigr\}.
\]
Then $0=i_0<i_1<i_2<\cdots$, and \eqref{eq:satellite-tau-growth} gives
\[
    \tau\bigl(Q^{i_{j+1}}(K_\ast)\bigr)\geq i_{j+1}+1>\tau\bigl(Q^{i_j}(K_\ast)\bigr).
\]
Set
\[
    \widehat L_{n,j}=L_{n,i_j}.
\]
Then $\widehat L_{n,0}=L_n$, and the preceding arguments give the topological concordances to $H$ and the pairwise single strong shake concordances for this subfamily.

Finally, \eqref{eq:hedden} gives $\tau(K_n)=\tau(\mathcal D(K_n))=1$ for every $n\geq 1$. Smooth link concordance induces concordances between matched components, and $\tau$ is unchanged by reversing a knot's orientation. The unordered multisets
\[
    \bigl\{1,\tau(Q^{i_j}(K_\ast))\bigr\},\qquad j\geq 0,
\]
are pairwise distinct, including the initial multiset $\{1,1\}$. They therefore distinguish the selected links even after component orderings and orientations are disregarded. Their second components have knot types $C_{n,j}=Q^{i_j}(K_\ast)$. Since $i_j\geq j$, equation \eqref{eq:satellite-genus-growth} gives
\[
    g_4(C_{n,j})\geq i_j+1\geq j+1.
\]
\end{proof}

Since every $Q^i$ has winding number one, the links $L_{n,i}$ have linking number one and hence unimodular zero-framed linking matrices. Their strong shake concordances to $L_n$ and Corollary \ref{cor:chain} therefore give
\[
    [M_{L_{n,i}}]=[M_{L_n}]\quad\text{in }\ThZ.
\]
In particular, homology-cobordism invariance of the correction term gives
\[
    d(M_{L_{n,i}})=-2\qquad(n\geq 1,\ i\geq 0).
\]
Thus the additional smooth concordance classes within each fixed shake-equivalence class share the same smooth integral homology cobordism class of zero-surgery.

%----------------------------------------------------%
\subsection{Converse questions}
Corollary \ref{cor:chain}, together with $M_H\cong S^3$, shows that shake equivalence to $H$ implies that the zero-surgery manifold is smoothly integral homology cobordant to $S^3$. We ask whether this necessary condition is sufficient within the topological concordance class of $H$.

\begin{question}\label{q:converse}
    Suppose that a two-component link $L$ is topologically concordant to $H$ and that $M_L$ is smoothly integral homology cobordant to $S^3$. Must $L\shsim H$, that is, must $L$ be connected to $H$ by a finite chain of shake concordances? More strongly, do the same hypotheses imply the existence of a single shake concordance from $L$ to $H$?
\end{question}

For locally knotted Hopf links, we can also ask the analogous question without assuming topological concordance to $H$.

\begin{question}\label{q:local}
    For arbitrary knots $J,K$, does $[M_{\mathcal{H}(J,K)}]=0$ in $\ThZ$ imply $\mathcal{H}(J,K)\shsim H$?
\end{question}

The homology-cobordism condition in these questions is equivalent to the zero-surgery manifold bounding a smooth integral homology $4$-ball. Vanishing of the correction term alone does not establish this condition.

For $\mathcal H(J,K)$, topological concordance to $H$ is equivalent to topological sliceness of both $J$ and $K$. Indeed, restricting a concordance to the individual component annuli gives necessity, while Lemma \ref{lem:local} gives sufficiency. Proposition \ref{prop:examples}(ii) provides an example of shake equivalence to $H$ outside its topological concordance class, namely $\mathcal H(U,T)$, motivating the broader formulation of Question \ref{q:local}.

%----------------------------------------------------%
\subsection{Nonzero framings}
Bosman's definition also allows nonzero integer framings \cite[Section 2]{Bos19}. With a common integer framing $r$ on the Hopf components, the surgery matrix is
\[
    \Lambda_r=
    \begin{pmatrix}
        r&1\\
        1&r
    \end{pmatrix},
    \qquad \det\Lambda_r=r^2-1.
\]
It is unimodular only for $r=0$. Thus a nonzero common framing takes the Hopf surgery outside the integral homology-sphere setting used here. Nonzero-framing analogues would require compatible framed surgery identities and a homology-cobordism obstruction suited to the resulting surgery manifolds.

\vspace{1em}
\paragraph{Tool Use} 
AI tools were used to assist with language editing and checks of references.

\bibliographystyle{amsalpha}
\bibliography{references}

\end{document}